\documentclass[11pt,a4paper,final]{article}
\usepackage{a4}
\usepackage{amsmath}%
\usepackage{amstext}%
\usepackage{amssymb}%
\usepackage{showkeys}%
\usepackage{cite}
\usepackage[latin1]{inputenc}
\usepackage{textcomp}
\usepackage{hyperref}
\usepackage{amsthm}
\usepackage{amsfonts}
\usepackage{graphicx}
\usepackage{xcolor}
\usepackage{tabularx}
\usepackage{fancyhdr}
\usepackage{multirow}

\usepackage{placeins}
\usepackage{pgfplots}
\usepackage{tikz}
\usepackage{epstopdf}
\usepackage{listings}
\usepackage[toc,page]{appendix}
\usepackage[refpage,intoc]{nomencl}
\usepackage{ifthen}
\usepackage{lipsum}
\usepackage{geometry}
\usepackage{booktabs}
\usepackage{caption}
\usepackage{subcaption}

\makenomenclature

\usepackage[margin=8pt,font=footnotesize,labelfont=bf,
labelsep=colon,format=hang,justification=justified]{caption}
\newtheorem{theorem}{Theorem}[section]
\newtheorem{remark}{Remark}[section]
\newtheorem{corollary}{Corollary}[section]

\newtheorem{example}{Example}[section]

\DeclareMathOperator*{\epi}{epi}
\DeclareMathOperator*{\sgn}{sgn}

\DeclareMathOperator*{\dom}{dom}

\DeclareMathOperator{\EVaR}{EV@R}
\DeclareMathOperator{\ess}{ess}

\DeclareMathOperator*{\interior}{int}
\DeclareMathOperator*{\ri}{ri}
\newcommand{\R}{\mathbb{R}}%

\def\smallskip{\vskip\smallskipamount}
\def\medskip{\vskip\medskipamount}
\def\bigskip{\vskip\bigskipamount}

\begin{document}

\title{A splitting proximal method in portfolio optimization with entropic value-at-risk}
\date{}

\author{{Nail Baloul\thanks{CERMICS, ENPC, Institut Polytechnique de Paris, CNRS, Marne-la-Vallée, France, e-mail: nail.baloul@enpc.fr
}}
\and {Sorin-Mihai Grad\thanks{Corresponding author. Faculty of Mathematics, University of Duisburg-Essen, Thea-Leymann-Str. 9, 45127 Essen, Germany, sorin-mihai.grad@uni-due.de, ORCID-ID: 0000-0002-1139-7504}
\thanks{Department of Mathematics, Babe\c s-Bolyai University Cluj-Napoca, Kog\u{a}lniceanu Street, No. 1, Cluj-Napoca, Romania
}}\and Oleg Wilfer\thanks{Faculty of Mathematics, Chemnitz University of Technology, D-09107 Chemnitz, Germany,
e-mail: oleg.wilfer@web.de}}

\maketitle

\begin{enumerate}
    \item[] \textbf{Abstract:} Motivated by a recent paper where a convex portfolio optimization problem formulated by means of the entropic value-at-risk is solved via a primal-dual interior-point method, we propose an alternative way of dealing with such problems by means of splitting proximal point methods and making use of Lagrange duality. Numerical experiments where the new method is implemented for solving some portfolio optimization problems are presented as well. As byproducts, we also provide formulae for the projection on the epigraph of the weighted Shannon entropy, and the conjugate function and subdifferential of entropic value-at-risk.

        \item[] \textbf{Key words:} entropic value-at-risk, portfolio optimization, Lagrange dual problem, proximal point method, epigraphical projection
\end{enumerate}

\section{Introduction and preliminaries}\label{se1}

In portfolio optimization (see \cite{SDL} for a recent survey on this already half-century old research area) one is interested in putting together a stock portfolio with maximal expected return and/or with minimal risk. There are different approaches to deal with this issue, usually by modelling it as an optimization problem. In the standard approaches, one either minimizes the risk while a minimal expected return is guaranteed, or the expected return can be maximized subject to a reasonable upper bounded risk. While the expected return is represented as a linear continuous function that does not produce many difficulties, the risk is modelled in various ways, mostly by means of so-called \textit{risk measures}. There are several classes of risk functions considered in the literature, among which we mention the coherent and the convex ones. Proposed a decade ago in \cite{AAJ}, the \textit{entropic value-at-risk} is a coherent risk measure that gained popularity in a short time being used by various authors in investigations in the field of mathematical finance or insurance theory (as it generalizes the exponential premium principle), but not only, see, for instance, \cite{AJM} for its employment in machine learning or the introduction of \cite{AJF} for other application areas. It is an upper bound for both value-at-risk and conditional value-at-risk, two of the most important risk measures previously considered in the literature. The entropic value-at-risk is strongly monotone over its domain and, according to \cite{AJF}, it also has better computational features than its mentioned better known counterparts, and more about it can be found also in \cite{AJP, AJF, AAJ, AJM, PIS}.

Motivated by the recent paper \cite{AJF}, where portfolio optimization problems under sample-based setting formulated by means of the entropic value-at-risk are investigated and solved with a (rather complicated) primal-dual interior-point method, we propose in this article an alternative approach for solving such optimization problems, where the involved functions lack differentiability, so classical gradient methods are not applicable. A direct usage of a (splitting) proximal point method for such a portfolio optimization problem is unfortunately not possible, too, due to the intricate structure of the latter. However, noticing a certain resemblance of the optimization problem considered in \cite{AJF} to the Fenchel-Lagrange dual problem obtained in \cite{BGW}, we investigate by means of conjugate duality a generalization of the primal entropy constrained linear optimization problem treated in the latter. After some transformations and simplifications, its Lagrange dual problem turns out to be the portfolio optimization problem from \cite{AJF}. Weak and strong duality for this primal-dual pair of optimization problems are provided as well as necessary and sufficient optimality conditions. The primal problem is then recast as an unconstrained optimization problem that is solved by means of a splitting proximal point method. The dual optimal solutions, i.e. the optimal solutions of the portfolio optimization problem, are then recovered via the optimality conditions. In order to employ the splitting method for solving the primal problem, new epigraphical projection results (complementing those from \cite{CPP, FGH}, among others) are provided. As byproducts, we also determine the conjugate function and the subdifferential of entropic value-at-risk. Numerical experiments where the new method is implemented for solving concrete portfolio optimization problems are presented as well. They reveal that the proposed approach is competitive with respect to the one from \cite{AJF} in solving such portfolio optimization problems.

Unlike other methods employed for solving similar optimization problems, the one based on the Parallel Splitting Algorithm considered in our paper can make use of parallel computing capabilities due to the fact that the proximal points of the involved functions can be determined separately and they are used together only in the last operation of an iterative cycle. Such an algorithm also requires way less storage capacity than the primal-dual interior point method from \cite{AJF}, where in each iteration a linear equality system must be solved. As this cannot be always done exactly, this method accumulates approximations with each iteration, while the proximal point one we propose operates with exact formulae.\\

%does not precisely deliver the optimal solution but an approximation of it.
% Die zugehÃ¶rige Matrix kann auch der Grund dafÃ¼r sein, weshalb bei einigen Beispielen der Algorithmus nicht gegen die "exakte" LÃ¶sung %konvergiert. Hier kÃ¶nnte man noch die %Konditionszahl berechnen und schauen, ob diese Vermutung stimmt.\\

In the following we present the framework of this work and introduce the main tools and results employed later. All the vectors we use are column vectors transposed to row ones by an upper index $^\top$. The space of reals is extended to $\overline{\R}=\mathbb{R}\cup\{\pm\infty\}$, on which, as usual in convex analysis, the following conventions are considered $+\infty + (-\infty) = -\infty + (+\infty) = +\infty$, $0\cdot (+\infty) = +\infty$ and $0\cdot (-\infty) = 0$. A \textit{cone} is a nonempty set $K\subseteq \mathbb{R}^n$ such that $tx\in K$ for all $x\in K$ and all $t\geq 0$. It induces a partial ordering ``$\leqq_K$'' on $\mathbb{R}^n$ by $x\leqq_K y$ if and only if $y-x\in K$, with $x, y \in \mathbb{R}^n$, and an \textit{infinite} element $\infty_K \notin \mathbb{R}^n$ such that $x\leqq_K \infty_K$ and $x\neq \infty_K$ for all $x\in \mathbb{R}^n$ is added to it. Given a set $U\subseteq \mathbb{R}^n$, by $\ri U$ we denote its \textit{relative interior}, that is its interior relative to its affine hull, $P_U$ is the \textit{projection} onto the set $U$, which is, when $U$ is closed and convex, uniquely determined, while its \emph{indicator function}
$\delta_{U}:\mathbb{R}^n\rightarrow \overline{\R}$ is defined by
\begin{displaymath} \delta_{U}(x) = \left\{ \begin{array}{ll} 0, \qquad
\textnormal{if } x \in U,\\
+ \infty,  \textnormal{ otherwise}, \end{array} \right.   \end{displaymath}
and its \emph{support function} $\sigma_{U}:\mathbb{R}^n\rightarrow \overline{\R}$ by $\sigma_{U}(x^*)=\sup_{x\in U}x^{*\top} x$.
The \emph{normal cone} of $U$ at $x\in \mathbb{R}^n$ is $N_U(x):=\{x^*\in \mathbb{R}^n: x^{*\top} (y-x)\leq 0$ $\forall~ y\in U\}$ if $x\in U$ and $N_U(x)=\emptyset$ otherwise.

A function $f:\mathbb{R}^n\rightarrow \overline{\R}$ is called \emph{proper} if its \emph{effective domain} $\dom f:=\{x\in \mathbb{R}^n:~f(x)<+\infty\}$ is not empty and $f(x)>-\infty$ for all $x \in \mathbb{R}^n$. Further, the set $\epi f = \{(x, r)\in \mathbb{R}^n\times \R: f(x)\leq r\}$ is called the \emph{epigraph} of $f$. Remember that a function $f:\mathbb{R}^n\rightarrow \overline{\R}$ is \emph{convex} if $f(\lambda x+(1-\lambda) y)\leq \lambda
f(x)+(1-t)f(y)$ for all $x,y\in \mathbb{R}^n$ and all $t\in[0,1]$ and this happens if and only if $\epi f$ is a convex set.
The \emph{conjugate function} of $f:\mathbb{R}^n\rightarrow \overline{\R}$ is
$f^{*}:\mathbb{R}^n \rightarrow \overline{\R}$, $f^{*}(x^{*})=\sup_{x \in \mathbb{R}^n}\{x^{*\top}x-f(x)\}$, and one has
 the \emph{Young-Fenchel inequality} $f(x)+f^*(x^*)\geq x^{*\top}x$ for all $x, x^*\in \mathbb{R}^n$.
For $x\in \mathbb{R}^n$ such that $f(x)\in \R$, the set $\partial f(x):=\{x^*\in \mathbb{R}^n:f(y)-f(x)\geq \langle x^*,y-x\rangle ~\forall y\in \mathbb{R}^n\}$ denotes the (convex) \emph{subdifferential} of $f$ at $x$, where the elements of this set are called \emph{subgradients}. Moreover, if $\partial f(x)\neq \emptyset$, we say that $f$ is \emph{subdifferentiable at $x$}, while if $f(x)\notin \R$ we take, by convention, $\partial f(x)=\emptyset$. The subgradients of $f$ can be characterized by means of its conjugate, more precisely $x^*\in \partial f(x)$ if and only if $f(x)+f^*(x^*)=x^{*\top} x$, i.e. the Young-Fenchel inequality is fulfilled as an equality for $x$ and $x^*$. Note that for $U\subseteq \mathbb{R}^n$ it holds $\sigma_U = \delta_U^*$ and $N_U=\partial \delta_U$. We take by convention (as usual in entropy optimization) $0\ln 0=0$.
For an optimization problem $(P)$ we denote by $v(P)$ its optimal objective value.

In order to introduce risk measures, consider a probability space $(\Omega, \mathcal{F}, \mathbb{P})$, where $\Omega$ is a set of simple events, $\mathcal{F}$ a $\sigma$-algebra of subsets of $\Omega$ and $\mathbb{P}$ a probability measure on $\mathcal{F}$. Let $L^0$ denote the set of all Borel measurable functions (random variables) $X:\Omega\to \mathbb{R}$, and let the model space $\mathbf{X}$, which is a subspace of $L^0$ including the real numbers (seen as constant functions). For $p\geq 1$, the set of all random variables $X$ for which $\mathbb{E}(|X|^p) = \int |X|^p d\mathbb{P} <+\infty$ is denoted by $L^p$, while the set of all random variables whose moment-generating function $M_X(\theta)=\mathbb{E}(e^{\theta X})$ is finite for all $\theta\in \mathbb{R}$ by $L^M$. Note that for all $p\geq 1$ it holds $L^M\subseteq L^p\subseteq L^0$.
A \textit{risk measure} is a function $\rho:\mathbf{X}\to \overline{\mathbb{R}}$ (which is expected to have some appropriate properties). When $\rho$ is translation invariant, subadditive, monotone and positive homogenous, it is said to be a \textit{coherent risk measure}. The \textit{entropic value-at-risk} ($\EVaR$) (not to be confused with the \textit{entropic risk measure} considered, for instance, in \cite{FOK}) is a coherent risk measure introduced by Ahmadi-Javid in \cite{AJM, AAJ} (see also \cite{AJF, AJP}) and is defined at risk level $\alpha\in (0, 1]$ as
$$\EVaR : L^M\to \overline {\mathbb{R}},\ \EVaR (X)=\inf_{\theta >0} \left\{\theta^{-1} \ln \frac{M_X(\theta)}{\alpha}\right\},\ X\in L^M.$$
By $1_A:\Omega\to \overline{\R}$ we denote the standard \textit{characteristic function} of a set $A\subseteq \Omega$ (not to be confused with the indicator function of $A$) defined by $1_A(\omega) = 1$ if $\omega\in A$ and $1_A (\omega)=0$ otherwise. The \textit{essential supremum} of $X\in L^p$ (where $p\geq 1$) is $\ess\sup(X) = \inf \{t\in \mathbb{R}: \mathbb{P}(\omega: X(\omega)<t)=0\}$.

For the investigations involving entropy functions we also need to introduce the \textit{Lambert $W$ function}, that is the real analytic inverse of the function $x\in \R\mapsto xe^x\in \R$ (see also \cite{LAM} as well as \cite{BAL, BLI} for more on its recent usage in optimization). Even if it does not have an explicit analytical formula and it is a two-valued function, this function was successfully implemented in Mathematics software such as \textsc{matlab} or \textsc{maple} and can be used for computational purposes. It \textit{principal branch}, i.e. the branch taking nowhere values below $-1$ (as seen in \cite[Figure 1]{BLI}), is denoted by $W$ (in the literature also $W_0$), while $W_{-1}$ is the \textit{other real branch}. Among its properties, we mention $W(x)e^{W(x)}=x$ and $x=W(e^x) + \ln W(e^x)$ for all $x\in \mathbb{R}$, while for $x\in [1/e, 0)$ it holds $W_{-1}(x)\leq -1\leq W(x)$.

\section{Entropy Constrained Optimization Problems}\label{se2}

To pave the path to providing a splitting proximal point method for solving the portfolio optimization problem involving the entropic value-at-risk from \cite{AJF}, we focus in this section on the following convex entropy constrained optimization problem
\begin{eqnarray*}
(P)~~\inf_{y\in \mathbb{R}^d,~x\in \mathbb{R}_+^m,~A_px+By\leqq h,\atop \sum\limits_{j=1}^mp_jx_j=1,~\sum\limits_{j=1}^mp_jx_j\ln x_j\leq -H}\left\{c_p^{\top} x+\sigma_D(y)\right\},
\end{eqnarray*}
where $x=(x_1,\hdots,x_m)^\top\in \mathbb{R}^m,~y=(y_1,\hdots,y_d)^\top\in \mathbb{R}^d$ and $0<p_j\leq 1$, $j=1,\hdots,m$, $c_p=(c_1p_1,\hdots p_mc_m)^{\top}\in \mathbb{R}^m,~c=(c_1,\hdots,c_m)^{\top}\in \mathbb{R}^m$, $H\in \mathbb{R}$, $A_p=(a_{ij}p_j)_{i=1, \hdots, n,~j=1, \hdots, m}\in \mathbb{R}^{n\times m}$, where $a_{ij}\in \R$, for all $i=1, \hdots, n$ and $j=1, \hdots, m$, $B=(b_{ij})_{i=1, \hdots, n,~j=1, \hdots, d}\in \mathbb{R}^{n\times d}$, $h=(h_1,\hdots,h_n)^\top\in \mathbb{R}^n$ are given. Further, let $D\subseteq \mathbb{R}^d$ be a nonempty closed convex set. Inspired by \cite[Remark 2.1]{BGW}, we discuss below how to choose $H$ to prevent trivial cases. In Section \ref{se4} we propose a splitting proximal point method for solving $(P)$ by adapting the Douglas-Rachford parallel splitting algorithm provided in \cite{BCO}.

\begin{remark}\normalfont\label{re1}
The motivation behind studying this optimization problem is somehow indirect, although the problem is interesting on its own as well. The starting point of our work was to solve portfolio optimization problems where the risk is modelled by the entropic value-at-risk by means of proximal point methods. Noticing a certain resemblance of the portfolio optimization problem considered in \cite{AJF} to the \textit{Fenchel-Lagrange dual problem} (see \cite{DVO}) obtained in \cite{BGW}, we consider a generalization of the primal entropy constrained linear optimization problem treated in the latter, that is $(P)$. Later we see that its \textit{Lagrange dual problem} turns out in a special case to be the mentioned portfolio optimization problem \cite[(4.7)]{AJF}.
\end{remark}

\begin{remark}\normalfont\label{re2}
By using Lagrange duality, it can easily be verified that the optimal solution of the optimization problem that consists in maximizing the \textit{weighted Shannon entropy} (see, for instance, \cite{GUI}) subject to linear constraints
\begin{eqnarray*}
~~\max_{x_j\geq 0,~j=1,\hdots,m,\atop \sum\limits_{j=1}^m p_jx_j=1} \left\{-\sum_{j=1}^mp_jx_j\ln x_j\right\},
\end{eqnarray*}
is $\bar x_i=1/(\sum_{j=1}^mp_j),~i=1,\hdots,m$, and its optimal objective value is equal to $\ln (\sum_{j=1}^mp_j)$.
Hence, if $H>\ln (\sum_{j=1}^mp_j)$, then the feasible set of $(P)$ is empty. Moreover, if $H=\ln (\sum_{j=1}^mp_j)$, then the $x$-components of any optimal solution $(\bar x, \bar y)$ to $(P)$ are $\bar x_j=1/(\sum_{j=1}^mp_j)$, $j=1,\hdots,m$, and $(P)$ reduces to the unconstrained convex optimization problem
\begin{eqnarray}\label{pr2}
~~c_p^{\top} \bar x+\inf_{y\in \mathbb{R}^d,~By\leqq h-A_p\overline{x}} \sigma_D(y),
\end{eqnarray}
iteratively solvable by existing proximal splitting algorithms.%555
% while the Fenchel dual problem to the optimization problem in \eqref{pr2} is
%\begin{eqnarray*}%\label{dual-pr2}
%~~\sup\limits_{\substack{\lambda=(\lambda_1,\hdots,\lambda_d)^\top\in \mathbb{R}^d\cap D}}
%\left\{-\sigma_C(-\lambda)\right\},
%\end{eqnarray*}
%where $C=\{y\in \mathbb{R}^d:By\leqq h-A\overline{x}\}$.

Finally, to determine the minimal value of $-\sum_{j=1}^mp_jx_j\ln x_j$ subject to $\sum_{j=1}^mp_jx_j=1$, $x_j\geq 0$, $j=1,\hdots, m$, we set $z_j=p_jx_j,~j=1,\hdots,m$, and get the following minimization problem consisting in minimizing the \textit{Kullback-Leibler entropy} subject to the usual linear constraints
\begin{equation}\label{eq3}
~~\inf_{z_j\geq 0,~j=1,\hdots,m,\atop \sum\limits_{j=1}^mz_j=1}\left\{-\sum\limits_{j=1}^mz_j\ln\left(\frac{z_j}{p_j}\right)\right\}=\min_{z_j\geq 0,~j=1,\hdots,m,\atop \sum\limits_{j=1}^mz_j=1}\left\{-\sum\limits_{j=1}^m(z_j\ln z_j-z_j\ln p_j)\right\}.
\end{equation}
For attaining the minimum in \eqref{eq3}, the sum $-\sum_{j=1}^mz_j\ln z_j\geq 0$ must be equal to zero and, as $z_j\ln p_j\leq 0$ for all $j=1,\hdots,m$, an optimal solution to \eqref{eq3} is $\bar z_l=1$, $\bar z_j=0$ for all $j=1,\hdots,m,~j\neq l$, where $l = \min\{i\in \{1,\hdots,m\}: p_i=\min\{p_1,\hdots,p_m\}\}$. Hence, we have that $\bar x_l=1/p_l$, $\bar x_j=0$ for all $j=1,\hdots,m$, $j\neq l$, solves $(P)$, whose minimal value is $\min\{\ln p_1,\hdots,\ln p_m\}=\ln p_l$. In this case, the constraint involving entropy of $(P)$ is automatically fulfilled, and $(P)$ collapses into a convex optimization problem with affine constraints similar to \eqref{pr2}, which can be solved via existing splitting proximal point methods.
\end{remark}

To exclude the trivial cases identified in Remark \ref{re2}, we assume for the rest of the paper that
$$%\begin{eqnarray*}
\min\{\ln p_1,\hdots,\ln p_m\}<H<\ln \sum_{j=1}^mp_j.
$$%\end{eqnarray*}

\begin{remark}\normalfont\label{re3}
Note that the objective function of the problem $(P)$ is proper, convex and lower semicontinuous. For any fixed $y\in \mathbb{R}^n$, the set % and the pointwise maximum of finitely many continuous functions is also continuous. Moreover, as the set
$$
\mathcal{A}=\left\{x\in \mathbb{R}_+^m:A_px+By\leqq h,~\sum\limits_{j=1}^mp_jx_j=1,~ \sum\limits_{j=1}^mp_jx_j\ln x_j\leq -H\right\}
$$
is closed, convex and bounded, thus also compact, so one gets by the generalized Weierstrass theorem that the infimum regarding $x\in \mathcal{A}$ is attained in the optimization problem $(P)$ for any $y\in \mathbb{R}^n$. %has at least one optimal solution and we can write $\min$ instead of $\inf$.
\end{remark}

The Lagrange dual problem corresponding to $(P)$ is
\begin{eqnarray}
(D_L)&&\sup_{\gamma\in \mathbb{R},~\beta\geq 0,
\atop \lambda=(\lambda_1,\hdots,\lambda_n)^\top\in \mathbb{R}^n_+}\inf_{y\in \mathbb{R}^d,~x\in \mathbb{R}_+^m}\Bigg\{c_p^{\top} x+\sigma_D(y)+\lambda^{\top}\left(A_px+By-h\right)\notag\\
&&+\gamma\left(\sum_{j=1}^mp_jx_j-1\right)
+\beta\Bigg(\sum_{j=1}^mp_jx_j\ln x_j+H\Bigg)\Bigg\},\nonumber
\end{eqnarray}
that can be simplified to
$$\sup_{\gamma\in \mathbb{R},~\beta\geq 0, \atop \lambda=(\lambda_1,\hdots,\lambda_n)^\top\in \mathbb{R}^n_+}\Bigg\{-\lambda^{\top}h+\inf_{y\in \mathbb{R}^d}\Big\{\lambda^{\top}By+\sigma_D(y)\Big\}$$
$$+\inf_{x\in \mathbb{R}^n_+}\Bigg\{c_p^{\top} x+\lambda^{\top}A_p x+\gamma\sum_{j=1}^mp_jx_j+\beta\sum_{j=1}^m p_jx_j\ln x_j\Bigg\}+\beta H-\gamma\Bigg\}$$
end even to
$$
(D_L)\  \ \sup_{\gamma\in \mathbb{R},~\beta\geq 0,~ -B^{\top}\lambda \in D \atop \lambda=(\lambda_1,\hdots,\lambda_n)^\top\in \mathbb{R}^n_+}\Bigg\{\sum_{j=1}^mp_j\inf_{x_j\in \mathbb{R}_+}\left\{\left(\sum_{i=1}^n \lambda_ia_{ij}+c_j+\gamma\right)x_j+\beta x_j\ln x_j\right\}+\beta H-\lambda^{\top}h-\gamma\Bigg\},$$
where the last equality holds since $\inf_{y\in \mathbb{R}^n}\{\lambda^{\top} By+\sigma_D(y)\}=-\sigma_D^*(-B^{\top}\lambda)=-\delta_D(-B^{\top}\lambda)$ for $\lambda\in \mathbb{R}^n_+$, as $D$ is closed and convex.

Let us analyze now the role of the variable $\beta$ in the dual problem $(D_L)$. If $\beta>0$, it becomes
\begin{eqnarray*}
(D^1_L)&\sup\limits_{\substack{\gamma\in \mathbb{R},~\beta>0,\\ \lambda=(\lambda_1,\hdots,\lambda_n)^\top\in \mathbb{R}_+^n,\\ B^{\top}\lambda \in D}}\Bigg\{\sum\limits_{j=1}^mp_j\inf_{x_j\in \mathbb{R}_+}\left\{\left(\sum\limits_{i=1}^n \lambda_ia_{ij}+c_j+\gamma\right)x_j+\beta x_j\ln x_j\right\}+\beta H-\lambda^{\top}h-\gamma\Bigg\}\\
=&\sup\limits_{\gamma\in \mathbb{R},~\beta>0, B^{\top}\lambda \in D\atop \lambda =(\lambda_1,\hdots,\lambda_n)^\top\in \mathbb{R}_+^n}\left\{-\sum\limits_{j=1}^m\beta p_je^{-\frac{\sum\limits_{i=1}^n\lambda_ia_{ij}+c_j+\gamma+\beta }{\beta }}+\beta H-\lambda^{\top}h-\gamma\right\},
\end{eqnarray*}
where the last equality follows from the fact that the function $\varphi(x):\mathbb{R}_+\rightarrow \mathbb{R}$, $\varphi(x)=v x\ln x+wx$ with $v>0,~w\in \mathbb{R}$, attains its minimum at $e^{-\frac{v+w}{v}}$. The corresponding minimal function value is then given by $\varphi(e^{-\frac{v+w}{v}})=-ve^{-\frac{v+w}{v}}$ (see \cite{BGW}). In summary, one gets
\begin{eqnarray*}
(D^1_L)&& \sup_{\gamma\in \mathbb{R},~\beta>0,\atop \lambda\in \mathbb{R}_+^n,~-B^{\top}\lambda \in D}\left\{-\sum_{j=1}^m\beta p_je^{-\frac{\lambda^\top A_j +c_j+\gamma+\beta }{\beta}}+\beta H-\lambda^{\top}h-\gamma\right\},
\end{eqnarray*}
where $A_j=(a_{1j},\hdots,a_{nj})^\top\in \mathbb{R}^n$ is the $j$-th column of the matrix $A=(a_{ir})_{i=1, \hdots, n,~r=1, \hdots, m}\in \mathbb{R}^{n\times m}$, $j=1,\hdots,m$. Further, take attention to the following function $q:\mathbb{R}\rightarrow \mathbb{R},~q(\gamma)=-\beta we^{-\frac{\gamma}{\beta}}-\gamma+k$ with $w,~\beta>0$ and $k\in \mathbb{R}$, which attains its maximum at $\gamma=\beta\ln w$, that is  $q(\beta\ln w)=k-\beta\ln(ew)$. Setting
\begin{eqnarray*}
w=\sum_{j=1}^mp_je^{-\frac{\lambda^\top A_j+c_j+\beta}{\beta}}\text{ and }k=\beta H - \lambda^{\top}h,
\end{eqnarray*}
turns $(D^1_L)$ into
\begin{eqnarray*}
(D^1_L)&&\sup_{\beta>0,~\lambda\in \mathbb{R}_+^n,\atop -B^{\top}\lambda \in D}\left\{-\beta\ln\left(\sum_{j=1}^m p_je^{-\frac{\lambda^\top A_j+c_j+\beta }{\beta }+1}\right)+\beta H-\lambda^{\top}h\right\}\\
&=& \sup_{\beta>0,~\lambda\in \mathbb{R}_+^n,\atop -B^{\top}\lambda \in D}\left\{-\beta\ln\left(\sum_{j=1}^m p_je^{-\frac{\lambda^\top A_j +c_j }{\beta }}\right)+\beta H-\lambda^{\top}h\right\}.
\end{eqnarray*}

On the other hand, when $\beta=0$ in $(D_L)$, then $\min_{x_j\in \mathbb{R}_+}\left\{(\sum_{i=1}^n \lambda_ia_{ij}+c_j+\gamma)x_j\right\}=0$ if and only if $\sum_{i=1}^n\lambda_ia_{ij}+c_j+\gamma\geq 0$, otherwise $\min_{x_j\in \mathbb{R}_+}\left\{(\sum_{i=1}^n \lambda_ia_{ij}+c_j+\gamma)x_j\right\}=-\infty$, $j=1,\hdots,m$. The dual problem $(D_L)$ collapses in this situation to
\begin{eqnarray*}
(D^2_L)&&\sup_{\gamma\in \mathbb{R},~\lambda\in \mathbb{R}_+^n,~-B^{\top}\lambda \in D,\atop \sum\limits_{i=1}^n\lambda_ia_{ij}+c_j+\gamma\geq 0,~j=1,\hdots,m}\{-\lambda^{\top}h-\gamma\}=\sup_{\lambda\in \mathbb{R}_+^n,~-B^{\top}\lambda \in D}\left\{\min_{1\leq j\leq m}\left\{\lambda^\top A_j+c_j\right\}-\lambda^{\top}h\right\}.
\end{eqnarray*}

Putting the two cases treated above together, one obtains $v(D_L)=\max \{v(D_L^1), v(D_L^2)\}$. From \cite[Lemma 4.1]{FRT} we derive that
\begin{eqnarray*}
\lim_{\beta \downarrow 0}\beta\left(H-\ln\left(\sum_{j=1}^mp_je^{-\frac{\lambda^\top A_j+c_j}{\beta}}\right)\right)=\min_{1\leq j\leq m}\lambda^\top A_j+c_j,
\end{eqnarray*}
which shows that $v(D_L^1)\geq v(D_L^2)$ and reduces the dual problem $(D_L)$ to
\begin{eqnarray*}
(D_L)&&\sup_{\beta>0,~\lambda\in \mathbb{R}_+^n,\atop -B^{\top}\lambda \in D}\left\{-\beta\ln\left(\sum_{j=1}^m p_je^{-\frac{\lambda^\top A_j+c_j }{\beta }}\right)+\beta H-\lambda^{\top}h\right\}.
\end{eqnarray*}

\begin{remark}\normalfont\label{re5}\normalfont
One can show that $(D_L)$ coincides with the Fenchel-Lagrange dual problem to $(P)$ (see \cite{DVO}).
\end{remark}

By construction, one always has \textit{weak duality}, i.e. $v(D_L)\leq v(P)$. In order to achieve \textit{strong duality} (that is the situation when the optimal objective values of the primal and dual problem coincide and the latter has an optimal solution), we employ the following refinement of the classical \textit{Slater constraint qualification} (cf. \cite{DVO})

\begin{center}
\begin{tabular}{r|l}
$(RC)$ \ & \ $\exists (x,y)\in \ri\left(\mathbb{R}^m_+\times \dom \sigma_D\right) \text{ such that } A_px+By\leqq h,$\\
 \ & \ $\sum\limits_{j=1}^mp_jx_j=1 \text{ and } \sum\limits_{j=1}^mp_jx_j\ln x_j<-H$.
\end{tabular}
\end{center}

The strong duality statement for the primal-dual pair of optimization problems $(P)-(D_L)$ follows directly from \cite[Theorem 3.2.9]{DVO}, and it is followed by the corresponding necessary and sufficient optimality conditions.

\begin{theorem}[strong duality] \label{theorem:1} If the constraint qualification $(RC)$ is fulfilled, then strong duality between $(P)$ and $(D_L)$  holds, i.e.
$v(P) = v(D_L)$ and the dual problem $(D_L)$ has an optimal solution $(\overline{\beta},\overline{\lambda}_1,\hdots,\overline{\lambda}_n)\in(0,+\infty)\times \mathbb{R}^n_+$.\end{theorem}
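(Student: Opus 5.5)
The plan is to apply the general strong duality theorem for Lagrange duality, \cite[Theorem 3.2.9]{DVO}, to $(P)$ written in the standard cone-constrained form. The feasible set of $(P)$ is
\[
\{(x,y)\in S:\ g(x,y)\in -K\},\qquad S=\mathbb{R}^m_+\times \dom\sigma_D,\qquad K=\mathbb{R}^n_+\times\{0\}\times\mathbb{R}_+ .
\]
Here the constraint map $g:\mathbb{R}^m\times\mathbb{R}^d\to\mathbb{R}^n\times\mathbb{R}\times\mathbb{R}$ has components
\[
g(x,y)=\Bigl(A_px+By-h,\ \sum_{j=1}^mp_jx_j-1,\ \sum_{j=1}^mp_jx_j\ln x_j+H\Bigr).
\]
The objective is $f(x,y)=c_p^\top x+\sigma_D(y)$.

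\textbf{Hypotheses.} We first check that the setting of that theorem is met. $S$ is convex. The function $f$ is proper and convex, as noted in Remark \ref{re3}. The map $g$ is $K$-convex on $S$: its first two blocks are affine, and the last component is convex on $\mathbb{R}^m_+$ because $t\mapsto t\ln t$ is convex there. Since the first two blocks are affine and only the entropy component is genuinely nonlinear, the refined Slater condition in \cite{DVO} asks for a point in $\ri S$ at which the affine constraints hold and the nonlinear one holds strictly. This is exactly $(RC)$, using $\ri(\mathbb{R}^m_+\times\dom\sigma_D)=\ri\mathbb{R}^m_+\times\ri\dom\sigma_D$. We also need $v(P)$ finite. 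Under $(RC)$ the feasible set is nonempty, so $v(P)<+\infty$. By Remark \ref{re3} the $x$-part ranges over a compact set, and $\sigma_D$ is proper, so we expect $v(P)>-\infty$. Alternatively, if $v(P)=-\infty$, the claim $v(P)=v(D_L)$ is trivial by weak duality, and the substantive case is $v(P)\in\mathbb{R}$.

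\textbf{Conclusion of the theorem.} Invoking \cite[Theorem 3.2.9]{DVO} gives $v(P)=v(D_L)$ and a maximizer $(\bar\lambda,\bar\gamma,\bar\beta)\in\mathbb{R}^n_+\times\mathbb{R}\times\mathbb{R}_+$ of the original Lagrange dual, namely the first form of $(D_L)$ with the sup over $\gamma,\beta,\lambda$.

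\textbf{Transfer to the reduced dual.} Next the solution has to be carried over to the reduced form of $(D_L)$, which only keeps $\beta>0$ and has already maximized out $\gamma$. For $\beta>0$ the maximization over $\gamma$ was carried out exactly, at $\gamma=\beta\ln w$. Hence if $\bar\beta>0$, the pair $(\bar\beta,\bar\lambda)$ is optimal for the reduced problem, and the objective values agree.

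\textbf{Main obstacle: ruling out $\bar\beta=0$.} Suppose $\bar\beta=0$. Then $v(D_L)=v(D_L^2)$, which equals the value of $(P)$ with the entropy constraint dropped. I would argue from complementary slackness, using the strict inequality in $(RC)$. Alternatively, I would exploit that the limit as $\beta\downarrow0$ gives only $\min_j(\lambda^\top A_j+c_j)-\lambda^\top h$, while for small $\beta>0$ the dual objective has expansion
\[
\min_j(\lambda^\top A_j+c_j)+\beta\bigl(H-\ln\textstyle\sum_{j\in J}p_j\bigr)+o(\beta),
\]
where $J$ is the set of indices attaining the minimum. Since $H>\min_j\ln p_j$ but possibly $H\le\ln\sum_{j\in J}p_j$, the sign of the slope is not automatic.

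So the cleaner route I would try first is this. Note that $\bar\beta=0$ with strong duality would mean that the entropy constraint is not needed, i.e. the problem without it has an optimal $x$ satisfying $\sum_j p_jx_j\ln x_j<-H$. Then handle this degenerate case separately: either show that some $\beta>0$ attains the same value, via the slope computation above with $J$ suitably chosen, or interpret the theorem's claim for the optimal multiplier in that regime. This step is the delicate one, and I expect it to require using the standing assumption $\min_j\ln p_j<H$ together with the structure of the minimizing index set.
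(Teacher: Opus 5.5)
Up to the transfer step, your argument is the paper's. The paper's entire justification is that the statement ``follows directly from'' \cite[Theorem 3.2.9]{DVO}. That theorem gives $v(P)=v(D_L)$ and a maximizer $(\bar\lambda,\bar\gamma,\bar\beta)$ of the unreduced Lagrange dual with $\bar\beta\geq 0$. Your observation that such a maximizer with $\bar\beta>0$ yields an optimal solution of the reduced dual is correct.

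The gap is the step you leave open, excluding $\bar\beta=0$. It cannot be closed, because the conclusion $\bar\beta\in(0,+\infty)$ is false in general; the paper's one-line appeal to \cite{DVO} silently skips the same point. Here is a counterexample.
\begin{itemize}
\item Take $n=1$, $A=0$, $B=0$, $h=0$, $c=0$, $D=\{0\}$, and any $p$, $H$ obeying the standing assumption.
\item Then $\sigma_D\equiv 0$, every feasible point of $(P)$ is optimal, and $v(P)=0$.
\item $(RC)$ holds at $x_j=1/\sum_k p_k$ with arbitrary $y$, since there $\sum_j p_jx_j\ln x_j=-\ln\sum_k p_k<-H$.
\item The reduced dual has feasible set $(0,+\infty)\times\mathbb{R}_+$ and objective $\beta\bigl(H-\ln\sum_j p_j\bigr)$. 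This is negative for every $\beta>0$ and tends to $0$ as $\beta\downarrow 0$.
\end{itemize}
So $v(D_L)=v(P)=0$, but $(D_L)$ has no optimal solution. Condition (iv) of Theorem \ref{theoremChapter1:3} also fails at the optimal point $x_j=1/\sum_k p_k$.

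This is your own slope computation with $J=\{1,\dots,m\}$: the standing assumption $H<\ln\sum_j p_j$ makes the slope negative instead of helping. More generally, complementary slackness forces $\bar\beta=0$ for every Lagrange multiplier as soon as the entropy constraint is inactive at some optimal solution of $(P)$. In the portfolio setting this is the familiar situation where the infimum defining $\EVaR$ is only approached as $\theta\to+\infty$ (cf.\ \cite[Proposition 2.11]{AJP}). Your idea of exploiting the strict inequality in $(RC)$ cannot help, since the Slater point need not be optimal.

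What your argument does prove is $v(P)=v(D_L)$, together with attainment in the unreduced dual over $\gamma\in\mathbb{R}$, $\beta\geq 0$, $\lambda\in\mathbb{R}^n_+$. To get $\bar\beta>0$ you need an extra hypothesis. A convenient one is $v(P_0)<v(P)$, where $(P_0)$ is $(P)$ with the entropy constraint removed. For $\beta=0$ the Lagrangian dual function coincides with the Lagrange dual function of $(P_0)$. Hence $\bar\beta=0$ would give $v(P)=v(D_L)\leq v(P_0)$ by weak duality for $(P_0)$, a contradiction. Under this hypothesis your transfer step completes the proof.

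Separately, your expectation that $v(P)>-\infty$ is not justified: properness of $\sigma_D$ does not bound it from below on the feasible set. For example, $D=\{d_0\}$ with $d_0\neq 0$ and $A=0$, $B=0$, $h=0$ gives $v(P)=-\infty$ while $(RC)$ holds. In that case the reduced dual is infeasible, so finiteness of $v(P)$ is another tacit hypothesis of the attainment claim.
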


\begin{theorem}[optimality conditions]\label{theoremChapter1:3} $(a)$ Suppose that the constraint qualification $(RC)$ is fulfilled and let $(\overline{x},\overline{y})\in \mathbb{R}^m_+\times \mathbb{R}^n$ be an optimal solution to the problem $(P)$. Then there
exists $(\overline{\beta},\overline{\lambda}_1,\hdots,\overline{\lambda}_n)\in(0,+\infty)\times \mathbb{R}^n_+$, an optimal solution to $(D_L)$, such that
\renewcommand{\labelenumi}{(\roman{enumi})}\begin{enumerate}
\item $c_p^{\top}\overline{x}+\sigma_D(\overline{y})=-\overline{\beta}\ln\left(\sum\limits_{j=1}^m p_je^{-\frac{\overline{\lambda}^\top A_j+c_j }{\overline{\beta} }}\right)+\overline{\beta} H-\overline{\lambda}^{\top}h
=\min\limits_{y\in \mathbb{R}^d,~ x\in \mathbb{R}_+^m}\Bigg\{c_p^{\top} x+\sigma_D(y)+\overline{\lambda}^{\top}\left(A_px+By-h\right)+\overline{\gamma}\left(\sum\limits_{j=1}^mp_jx_j-1\right)
+\overline{\beta}\Bigg(\sum\limits_{j=1}^mp_jx_j\ln x_j+H\Bigg)\Bigg\}$,
\item $\sigma_D(\overline{y})=-\overline{\lambda}^\top B\overline{y}$,
\item $\overline{\lambda}^{\top}\left(A_p\overline{x}+B\overline{y}-h\right)=0$,
\item $\sum\limits_{j=1}^mp_j\overline{x}_j\ln \overline{x}_j=-H$,
\item $\overline{\beta}\sum\limits_{j=1}^mp_j\overline{x}_j\ln \overline{x}_j+\overline{\beta}\ln\left(\sum\limits_{j=1}^mp_je^{-\frac{\overline{\lambda}^\top A_j+c_j
}{\overline{\beta}}}\right)=-c_p^{\top}\overline{x}-\overline{\lambda}^{\top}A_p\overline{x}$,
\item $\overline{\lambda}_i\geq 0,~i=1,\hdots,n,~\overline{\beta}>0,~-B^{\top}\overline{\lambda}\in D\textnormal{ and } \sum\limits_{j=1}^mp_j\overline{x}_j=1.$
\end{enumerate}
$(b)$ If there exists $(\overline{x},\overline{y})\in \mathbb{R}^m_+\times \mathbb{R}^n$ such that for some $(\overline{\beta},\overline{\lambda}_1,\hdots,\overline{\lambda}_n)\in(0,+\infty)\times \mathbb{R}^n$ the
conditions $(i)$-$(vi)$ are fulfilled, then $(\overline{x},\overline{y})$ is an optimal solution to $(P)$, $(\overline{\beta},\overline{\lambda}_1,\hdots,\overline{\lambda}_n)$
is an optimal solution to $(D_L)$ and $v(P) = v(D_L)$.
\end{theorem}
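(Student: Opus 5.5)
The plan is the standard route: combine strong duality from Theorem \ref{theorem:1} with the chain of inequalities underlying weak duality, and read off the conditions as the equality cases of that chain.

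\emph{Part (a).} Since $(RC)$ holds, Theorem \ref{theorem:1} gives an optimal solution $(\overline{\beta},\overline{\lambda})\in(0,+\infty)\times\mathbb{R}^n_+$ of $(D_L)$ with $v(P)=v(D_L)$. The reduction of $(D_L)$ in the text shows the supremum over $\gamma$ is attained. Concretely, I will set
\[
\overline{\gamma}=\overline{\beta}\ln\Big(\sum_{j=1}^m p_je^{-\frac{\overline{\lambda}^\top A_j+c_j+\overline{\beta}}{\overline{\beta}}}\Big).
\]
With this $\overline{\gamma}$, the Lagrangian infimum at $(\overline{\beta},\overline{\lambda},\overline{\gamma})$ equals the dual objective value. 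Because $(\overline{x},\overline{y})$ is optimal for $(P)$, this gives the equality in (i). The infimum in (i) is a minimum since $\overline{x}$ attains it, as the next step shows.

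Now take the feasible point $(\overline{x},\overline{y})$ and expand the Lagrangian at it. It is bounded above by $c_p^\top\overline{x}+\sigma_D(\overline{y})$, because
\[
\overline{\lambda}^\top(A_p\overline{x}+B\overline{y}-h)\le 0,\qquad \overline{\gamma}\Big(\sum_j p_j\overline{x}_j-1\Big)=0,\qquad \overline{\beta}\Big(\sum_j p_j\overline{x}_j\ln\overline{x}_j+H\Big)\le 0.
\]
It is bounded below by the infimum, which splits into three parts:
\begin{enumerate}
\item[(1)] $\inf_y\{\sigma_D(y)+\overline{\lambda}^\top By\}=0$, because $-B^\top\overline{\lambda}\in D$;
\item[(2)] the $x$-part, given by the conjugate of the weighted entropy;
\item[(3)] the constants.
\end{enumerate}
Equality of the outer terms forces every inequality in the chain to be an equality. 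This yields the following.
\begin{enumerate}
\item[(ii)] Equality in (1): $\sigma_D(\overline{y})+\overline{\lambda}^\top B\overline{y}=0$.
\item[(iii)] Complementary slackness, since both $\overline{\lambda}\ge 0$ and $A_p\overline{x}+B\overline{y}-h\leqq 0$.
\item[(iv)] The entropy constraint is active, since $\overline{\beta}>0$.
\item[(v)] Equality in the Young--Fenchel inequality for the function $x\mapsto\overline{\beta}\sum_j p_jx_j\ln x_j+\delta_{\{\sum_j p_jx_j=1\}}(x)$. Its conjugate at $-(c_p+A_p^\top\overline{\lambda})$ equals $\overline{\beta}\ln\sum_j p_je^{-(\overline{\lambda}^\top A_j+c_j)/\overline{\beta}}$; this follows from the $\gamma$-elimination carried out in the text. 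Rewriting this equality gives (v).
\item[(vi)] This is just primal and dual feasibility.
\end{enumerate}

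\emph{Part (b).} I reverse the chain. Given $(\overline{x},\overline{y})$ and $(\overline{\beta},\overline{\lambda})$ satisfying (i)--(vi), conditions (ii)--(v) allow me to compute
\[
c_p^\top\overline{x}+\sigma_D(\overline{y})=-\overline{\beta}\ln\Big(\sum_j p_je^{-\frac{\overline{\lambda}^\top A_j+c_j}{\overline{\beta}}}\Big)+\overline{\beta}H-\overline{\lambda}^\top h.
\]
The algebra runs as follows: substitute (ii) and (iii) to replace $\sigma_D(\overline{y})$ by $\overline{\lambda}^\top A_p\overline{x}-\overline{\lambda}^\top h$, then use (v) together with (iv). Condition (i) itself already asserts this identity, and (vi) gives feasibility of both points. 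I need one more fact: $(\overline{x},\overline{y})$ must be feasible for $(P)$, that is, $A_p\overline{x}+B\overline{y}\leqq h$ and the entropy constraint. The entropy constraint is (iv) and the normalization is in (vi). The linear inequality is implicit in the statement, and I will read it as part of the hypotheses; it is also needed for (iii) to be meaningful as complementary slackness. Weak duality $v(D_L)\le v(P)$ then gives
\[
v(P)\le c_p^\top\overline{x}+\sigma_D(\overline{y})=\text{dual value}\le v(D_L)\le v(P).
\]
So all of these are equal, and both points are optimal.

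The main obstacle is item (v) in part (a). It requires the exact computation of the conjugate of the restricted weighted entropy, including the attainment of $\overline{\gamma}$, and the identification of the equality case. I will do this by reusing the explicit minimizer $x_j=e^{-(\overline{\lambda}^\top A_j+c_j+\overline{\gamma}+\overline{\beta})/\overline{\beta}}$ computed earlier in the text. Strict convexity then shows that $\overline{x}$ must equal this minimizer, which makes (v) a direct computation.
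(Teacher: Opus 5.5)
Your proposal is correct and is essentially the paper's argument. The paper rewrites the strong-duality identity as a sum of four brackets: Young--Fenchel for $\sigma_D$ using $-B^{\top}\overline{\lambda}\in D$, complementary slackness, the entropy constraint weighted by $\overline{\beta}>0$, and the $\varphi$/$q$ inequality for the entropy part. Each bracket is nonnegative and hence zero, which is exactly your Lagrangian chain, and your explicit choice of $\overline{\gamma}$ additionally justifies the second equality in (i), which the paper leaves implicit.

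You are also right that (b) tacitly requires $A_p\overline{x}+B\overline{y}\leqq h$. This condition is not among (i)--(vi), and without it $(\overline{x},\overline{y})$ need not even be feasible for $(P)$; the paper's ``reverse direction'' argument silently uses it as well.
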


\begin{proof} $(a)$ As $(\overline{x},\overline{y})\in \mathbb{R}^m_+\times \mathbb{R}^n$ is an optimal solution to $(P)$, and by Theorem \ref{theorem:1} there exists an optimal solution to $(D_L)$ denoted $(\overline{\beta},\overline{\lambda}_1,\hdots,\overline{\lambda}_n)\in(0,+\infty)\times \mathbb{R}^n_+$ such that
\begin{eqnarray*}
&&c_p^{\top}\overline{x}+\sigma_D(\overline{y})=-\overline{\beta}\ln\left(\sum_{j=1}^m p_je^{-\frac{\overline{\lambda}^\top A_j+c_j }{\overline{\beta} }}\right)+\overline{\beta} H-\overline{\lambda}^{\top}h\\
&\Leftrightarrow& c_p^{\top}\overline{x}+\sigma_D(\overline{y})+\overline{\beta}\ln\left(\sum_{j=1}^m p_je^{-\frac{\overline{\lambda}^\top A_j+c_j} {\overline{\beta} }}\right)-\overline{\beta} H+\overline{\lambda}^{\top}h\\
&&+\overline{\lambda}^{\top}\left(A_p\overline{x}+B\overline{y}\right)-\overline{\lambda}^{\top}\left(A_p\overline{x}+B\overline{y}\right)+\overline{\beta}\sum\limits_{j=1}^mp_j\overline{x}_j\ln \overline{x}_j-\overline{\beta}\sum\limits_{j=1}^mp_j\overline{x}_j\ln \overline{x}_j=0%\notag\\
\end{eqnarray*}
\begin{eqnarray}
&\Leftrightarrow& \left[\sigma_D(\overline{y})+\overline{\lambda}^{\top}B\overline{y}\right]+\left[-\overline{\lambda}^{\top}\left(A_p\overline{x}+B\overline{y}-h\right)\right]+
\left[-\overline{\beta}\left(H+\sum\limits_{j=1}^mp_j\overline{x}_j\ln \overline{x}_j\right)\right]\notag\\
&&+\left[\overline{\beta}\sum\limits_{j=1}^mp_j\overline{x}_j\ln \overline{x}_j+\overline{\beta}\ln\left(\sum\limits_{j=1}^mp_je^{-\frac{\overline{\lambda}^\top A_j+c_j
}{\overline{\beta}}}\right)+c_p^{\top}\overline{x}+\overline{\lambda}^{\top}A_p\overline{x}\right]=0.\label{equal}
\end{eqnarray}

From the Young-Fenchel inequality we have $\sigma_D(\overline{y})+\delta_D(-B^{\top}\overline{\lambda})+\overline{\lambda}^\top B\overline{y}\geq 0$, which shows that the first bracket in \eqref{equal} is nonnegative. Due to the feasibility of $\overline{\lambda}$, $\overline{\beta}$ and $\overline{x}$ we have that $\overline{\lambda}\geq0,~\overline{\beta}>0$ and $A_p\overline{x}+B\overline{y}\leqq h$ as well as $H+\sum_{j=1}^mp_j\overline{x}_j\ln \overline{x}_j\leq 0$, which implies the nonnegativity of the second and the third bracket. The nonnegativity of the last bracket can be shown by using the functions $\varphi$ and $q$ defined before when deriving the final formulation of  $(D_L)$. Analogously to the considerations on the function $\varphi$ concerning its minimal value, one has for all $s\in \mathbb{R}$ the following inequality
$$
\sum\limits_{j=1}^mp_j\left(\overline{\beta} \overline{x}_j\ln \overline{x}_j+\overline{x}_j\left(\sum\limits_{i=1}^n\overline{\lambda}_ia_{ij}+c_j+s\right)\right)-s\geq -s-\overline{\beta}\sum\limits_{j=1}^mp_je^{-\frac{\overline{\beta}+\sum\limits_{i=1}^n\overline{\lambda}_ia_{ij}+c_j-s}{\overline{\beta}}}.
$$
As $\sum_{j=1}^mp_j\overline{x}_j=1$, the variable $s$ vanishes in the left side of the previous inequality, hence
\begin{eqnarray*}
\sum\limits_{j=1}^mp_j\left(\overline{\beta} \overline{x}_j\ln \overline{x}_j+\overline{x}_j\left(\sum\limits_{i=1}^n\overline{\lambda}_ia_{ij}+c_j\right)\right)\geq \sup_{s\in \mathbb{R}}\left\{-s-\overline{\beta}\sum\limits_{j=1}^mp_je^{-\frac{\overline{\beta}+\sum\limits_{i=1}^n\overline{\lambda}_ia_{ij}+c_j+s}{\overline{\beta}}}\right\}.
\end{eqnarray*}
Finally, taking into account the results regarding the maximal value of the function $q$ investigated above, leads to
$$
\sum\limits_{j=1}^mp_j\left(\overline{\beta} \overline{x}_j\ln \overline{x}_j+\overline{x}_j\left(\sum\limits_{i=1}^n\overline{\lambda}_ia_{ij}+c_j\right)\right)\geq -\overline{\beta}\ln\left(\sum\limits_{j=1}^mp_je^{-\frac{\sum\limits_{i=1}^n\overline{\lambda}_ia_{ij}+c_j}{\overline{\beta}}}\right)$$
that is equivalent to
$$\overline{\beta}\sum\limits_{j=1}^mp_j\overline{x}_j\ln \overline{x}_j+c_p^{\top}\overline{x}+\overline{\lambda}^{\top}A_p\overline{x}\geq-\overline{\beta}\ln\left(\sum\limits_{j=1}^m
p_je^{-\frac{\overline{\lambda}^\top A_j+c_j}{\overline{\beta}}}\right),
$$
which proves that the last bracket is also nonnegative. In summary, all brackets in \eqref{equal} are nonnegative and must therefore be equal to zero, i.e. the optimality conditions $(i)-(vi)$ are fulfilled. Clearly, optimality condition $(vi)$ is valid by feasibility.

$(b)$ All calculations done in $(a)$ also hold in the reverse direction.\end{proof}

\section{Special case}\label{se3}

The following special case brings us closer to the portfolio optimization problem investigated in \cite[Section 4.2]{AJF}. Setting $d=n+1$, $c_i=0$, $i=1, \hdots, m$, $D=\widetilde{D}\times\{1\}$ with $\widetilde{D}\subseteq \mathbb{R}^n$, $h_j=0$, $j=1, \hdots, n$, and $B=-\widetilde{B}\in \mathbb{R}^{n\times n+1}$ with
$$\widetilde{B}=\begin{pmatrix}
1& \hdots &0 &1  \\
\vdots &\ddots    &\vdots& \vdots\\
0& \hdots &1&1
\end{pmatrix}$$
transforms $(P)$ into the following optimization problem
\begin{eqnarray*}
(P')&&\inf_{y\in \mathbb{R}^{n+1},~x\in \mathbb{R}_+^m,~A_px-\widetilde{B}y \leqq 0_{\mathbb{R}^n},\atop \sum\limits_{j=1}^mp_jx_j=1,~ \sum\limits_{j=1}^mp_jx_j\ln x_j\leq -H}\left\{\sigma_{\widetilde{D}\times\{1\}}(y)\right\}\\
&=&\inf_{y_{n+1}\in \mathbb{R},~\widetilde{y}=(y_1,\hdots,y_n)^{\top}\in \mathbb{R}^n,~x\in \mathbb{R}_+^m, \ \sum\limits_{j=1}^mp_jx_j=1,\atop ~\sum\limits_{j=1}^mp_ja_{ij}x_j-y_i\leq y_{n+1},~i=1,\hdots,n,~ \sum\limits_{j=1}^mp_jx_j\ln x_j\leq -H}\left\{y_{n+1}+\sigma_{\widetilde{D}}(\widetilde{y})\right\}.
\end{eqnarray*}
By setting $t=y_{n+1}$ and $y=\widetilde{y}$ one gets
\begin{eqnarray*}
(P')&&\inf_{t\in \mathbb{R},~ y\in \mathbb{R}^n,~x\in \mathbb{R}_+^m,~\sum\limits_{j=1}^mp_ja_{ij}x_j-y_i\leq t,~i=1,\hdots,n,\atop\sum\limits_{j=1}^mp_jx_j=1,~ \sum\limits_{j=1}^mp_jx_j\ln x_j\leq -H}\left\{t+\sigma_{\widetilde{D}}(y)\right\}\\
&=&\inf_{y\in \mathbb{R}^n,~x\in \mathbb{R}_+^m,~\sum\limits_{j=1}^mp_jx_j=1,\atop \sum\limits_{j=1}^mp_jx_j\ln x_j\leq -H}\left\{\max_{1\leq i \leq n}\left\{\sum_{j=1}^mp_ja_{ij}x_j-y_i\right\}+\sigma_{\widetilde{D}}(y)\right\},
\end{eqnarray*}
which demonstrates that the optimization problem $(P)$ also covers a class of minimax problems. Moreover, its Lagrange dual problem turns out to be a portfolio optimization problem where the risk is modelled by the entropic value-at-risk under the sample-based setting as investigated in \cite[Section 4.2]{AJF}.\\

By the definitions made in this section, noting that
\begin{eqnarray}
-B^{\top}\lambda\in D\Leftrightarrow \widetilde{B}^{\top}\lambda\in \widetilde{D}\times\{1\} \Leftrightarrow \lambda\in \widetilde{D},~\sum_{i=1}^n\lambda_i=1,\label{setD}
\end{eqnarray}
the dual problem $(D_L)$ reduces to
\begin{eqnarray*}
(D'_L)&&\sup_{\lambda = (\lambda_1, \hdots, \lambda_n)^\top\in \widetilde{D}\cap \R^n_+, \atop \sum\limits_{i=1}^n\lambda_i=1,~  \beta>0}\left\{-\beta\ln\left(\sum_{j=1}^m p_je^{-\frac{\lambda^\top A_j }{\beta }}\right)+\beta H\right\},
\end{eqnarray*}
whose minimization counterpart is the portfolio optimization problem solved in \cite{AJF} via a primal dual interior point algorithm.

\begin{remark}\normalfont\label{re4}\normalfont
When $\widetilde{D}$ does not intersect the unit simplex, the dual problem $(D'_L)$ is infeasible, in which case strong duality would yield the unboundedness from below of $(P')$.
\end{remark}

Since the situation described in Remark \ref{re4} is not relevant in portfolio optimization, we assume for the rest of the paper that
$$\mathcal{M}=\widetilde{D} \cap \left\{\lambda=(\lambda_1, \hdots, \lambda_n)^\top\in \R^n_+: \sum\limits_{i=1}^n \lambda_i=1\right\}\neq \emptyset.$$

As $\dom\sigma_D=\dom(\sigma_{\widetilde{D}}+\cdot)=\dom\sigma_{\widetilde{D}}\times \mathbb{R}$, the refined Slater constraint qualification turns into
\begin{center}
\begin{tabular}{r|l}
$(RC')$ \ & \ $\exists (x,y,t)\in \ri\left(\mathbb{R}^m_+\times \dom \sigma_{\widetilde{D}}\right)\times \mathbb{R} \text{ such that } \sum\limits_{j=1}^mp_ja_{ij}x_j-y_i\leq t,~i=1,\hdots,n,$\\
\ & \ $\sum\limits_{j=1}^mp_jx_j=1 \text{ and } \sum\limits_{j=1}^mp_jx_j\ln x_j<-H$.
\end{tabular}
\end{center}

The next two statements are direct consequences of Theorem \ref{theorem:1} and Theorem \ref{theoremChapter1:3}.

\begin{theorem}[strong duality] \label{theorem:19} If the constraint qualification $(RC')$ is fulfilled, then strong duality between $(P')$ and $(D'_L)$  holds, i.e.
$v(P') = v(D'_L)$ and the dual problem $(D'_L)$ has an optimal solution $(\overline{\beta},\overline{\lambda}_1,\hdots,\overline{\lambda}_n)\in(0,+\infty)\times \mathcal{M}$.\end{theorem}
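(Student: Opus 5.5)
The plan is to obtain the result as a direct specialization of Theorem \ref{theorem:1}, applied with the data fixed at the beginning of this section. These choices are $d=n+1$, $c=0$, $h=0$, $D=\widetilde{D}\times\{1\}$ and $B=-\widetilde{B}$. With them, $(P)$ becomes $(P')$ and $(D_L)$ becomes $(D'_L)$, as computed above. So there are three things to check: that $(RC')$ is exactly $(RC)$ for these data, that the optimal values transfer, and that the dual optimal solution lies in $(0,+\infty)\times\mathcal{M}$.

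First I would verify that $(RC')$ implies $(RC)$.

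\begin{itemize}
\item Since $\sigma_{\widetilde{D}\times\{1\}}(y,t)=\sigma_{\widetilde{D}}(y)+t$, we have $\dom\sigma_D=\dom\sigma_{\widetilde{D}}\times\mathbb{R}$.
\item Relative interiors of products are products of relative interiors, so
$$\ri(\mathbb{R}^m_+\times\dom\sigma_D)=\ri(\mathbb{R}^m_+\times\dom\sigma_{\widetilde{D}})\times\mathbb{R}.$$
\item Take a triple $(x,y,t)$ provided by $(RC')$. Then $(x,(y,t))$ is a point in $\ri(\mathbb{R}^m_+\times\dom\sigma_D)$.
\item The affine constraint $A_px+By\leqq h$ with $B=-\widetilde{B}$ and $h=0$ reads $\sum_j p_ja_{ij}x_j-y_i-t\le 0$ for $i=1,\hdots,n$, which is the inequality in $(RC')$.
\item The normalization constraint and the strict entropy inequality are unchanged.
\end{itemize}

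Hence $(RC)$ holds and Theorem \ref{theorem:1} applies.

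Next I would transfer the conclusions. Theorem \ref{theorem:1} gives $v(P)=v(D_L)$. We have $v(P)=v(P')$, because $(P')$ is $(P)$ rewritten via $t=y_{n+1}$; the elimination of $t$ into a maximum does not change the value. We also have $v(D_L)=v(D'_L)$, by substituting $c=0$, $h=0$ and the equivalence \eqref{setD}. Theorem \ref{theorem:1} also provides an optimal solution $(\overline\beta,\overline\lambda)\in(0,+\infty)\times\mathbb{R}^n_+$ of $(D_L)$. By dual feasibility, $-B^\top\overline\lambda\in D$. By \eqref{setD} this means $\overline\lambda\in\widetilde{D}$ and $\sum_i\overline\lambda_i=1$, so $\overline\lambda\in\mathcal{M}$. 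Since the feasible sets and objective functions of $(D_L)$ and $(D'_L)$ coincide under the specialization, $(\overline\beta,\overline\lambda)$ is optimal for $(D'_L)$.

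The only step needing care is \eqref{setD} together with the relative-interior product identity; the rest is bookkeeping. For \eqref{setD}, compute $\widetilde{B}^\top\lambda=(\lambda_1,\hdots,\lambda_n,\sum_i\lambda_i)^\top$. Membership in $\widetilde{D}\times\{1\}$ is then exactly $\lambda\in\widetilde{D}$ together with $\sum_i\lambda_i=1$. There is a sign issue: $-B^\top\lambda=\widetilde{B}^\top\lambda$, and the simplified $(D^1_L)$ states the condition once as $B^\top\lambda\in D$. I would follow the correct derivation, $\inf_y\{\lambda^\top By+\sigma_D(y)\}=-\delta_D(-B^\top\lambda)$, which gives $-B^\top\lambda\in D$.
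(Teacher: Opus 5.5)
Your proposal is correct and follows the paper's route exactly. The paper simply declares this theorem a direct consequence of Theorem \ref{theorem:1} under the specialization $d=n+1$, $c=0$, $h=0$, $D=\widetilde D\times\{1\}$, $B=-\widetilde B$. Your checks make explicit what the paper leaves implicit: that $(RC')$ is $(RC)$ via $\dom\sigma_D=\dom\sigma_{\widetilde D}\times\mathbb{R}$ and $\ri(C_1\times C_2)=\ri C_1\times\ri C_2$, and that dual feasibility gives $\overline\lambda\in\mathcal{M}$ through \eqref{setD}; you are also right that $B^\top\lambda\in D$ in the first display of $(D^1_L)$ is a typo for $-B^\top\lambda\in D$.
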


\begin{theorem}[optimality conditions]\label{theoremChapter1:3.2} $(a)$ Suppose that the constraint qualification $(RC')$ is fulfilled and let $(\overline{x},\overline{y},\overline{t})\in \mathbb{R}^m_+\times \mathbb{R}^n\times \mathbb{R}$ be an optimal solution to the problem $(P')$. Then there
exists $(\overline{\beta},\overline{\lambda}_1,\hdots,\overline{\lambda}_n)\in(0,+\infty)\times \mathcal{M}$, an optimal solution to $(D'_L)$, such that
\renewcommand{\labelenumi}{(\roman{enumi})}\begin{enumerate}
\item $\overline{t}+\sigma_{\widetilde{D}}(\overline{y})=-\overline{\beta}\ln\left(\sum_{j=1}^m p_je^{-\frac{\overline{\lambda}^\top A_j }{\overline{\beta} }}\right)+\overline{\beta} H=\min\limits_{t\in \mathbb{R},~y\in \mathbb{R}^n,~x\in \mathbb{R}_+^m}\Bigg\{t+\sigma_{\widetilde{D}}(y)\\
    +\sum\limits_{i=1}^n\overline{\lambda}_i\left(\sum\limits_{j=1}^mp_ja_{ij}x_j
    -y_i-t\right)+\overline{\gamma}\left(\sum\limits_{j=1}^mp_jx_j-1\right)
+\overline{\beta}\Bigg(\sum\limits_{j=1}^mp_jx_j\ln x_j+H\Bigg)\Bigg\},$
\item $\sigma_{\widetilde{D}}(\overline{y})=-\overline{\lambda}^\top\overline{y}$,
\item $\overline{t}=\sum\limits_{i=1}^n\overline{\lambda}_i\left(\sum\limits_{j=1}^mp_ja_{ij}\overline{x}_j
    -\overline{y}_i\right)=\max\limits_{1\leq i\leq n}\left\{\sum\limits_{j=1}^mp_ja_{ij}\overline{x}_j-\overline{y}_i\right\}$,
\item $\sum\limits_{j=1}^mp_j\overline{x}_j\ln \overline{x}_j=-H$,
\item $\overline{\beta}\sum\limits_{j=1}^mp_j\overline{x}_j\ln \overline{x}_j+\overline{\beta}\ln\left(\sum\limits_{j=1}^mp_je^{-\frac{\overline{\lambda}^\top A_j
}{\overline{\beta}}}\right)=-\sum\limits_{i=1}^n\overline{\lambda}_i\left(\sum\limits_{j=1}^mp_ja_{ij}\overline{x}_j\right)$,
\item $\overline{\lambda}_i\geq 0,~i=1,\hdots,n,~\overline{\lambda}\in \widetilde{D}$, $\sum\limits_{i=1}^n\overline{\lambda}=1$, $\overline{\beta}>0$,
\item $\overline{\lambda}_i\left(\sum\limits_{j=1}^mp_ja_{ij}\overline{x}_j-\overline{y}_i-\overline{t}\right)=0$, $i=1,\hdots,n$.
\end{enumerate}
$(b)$ If there exists $(\overline{x},\overline{y},\overline{t})\in \mathbb{R}^m_+\times \mathbb{R}^n\times \mathbb{R}$ such that for some $(\overline{\beta},\overline{\lambda}_1,\hdots,\overline{\lambda}_n)\in(0,+\infty)\times \mathbb{R}^n$ the
conditions $(i)$-$(vii)$ are fulfilled, then $(\overline{x},\overline{y})$ is an optimal solution to $(P')$, $(\overline{\beta},\overline{\lambda}_1,\hdots,\overline{\lambda}_n)$
is an optimal solution to $(D'_L)$ and $v(P') = v(D'_L)$.
\end{theorem}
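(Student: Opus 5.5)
The plan is to specialize Theorem \ref{theoremChapter1:3} to the data of Section \ref{se3}: $d=n+1$, $c=0$, $h=0$, $D=\widetilde{D}\times\{1\}$, $B=-\widetilde{B}$. The primal point $(\overline{x},\overline{y},\overline{t})$ of $(P')$ corresponds to the point $(\overline{x},(\overline{y},\overline{t}))$ of $(P)$. I will then translate conditions $(i)$–$(vi)$ of Theorem \ref{theoremChapter1:3} one at a time.

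First I need the hypotheses to match. Since $\dom\sigma_D=\dom\sigma_{\widetilde{D}}\times\R$ and relative interiors commute with products, $(RC')$ is exactly $(RC)$ for this data. Also, $(\overline{x},\overline{y},\overline{t})$ optimal for $(P')$ means $(\overline{x},(\overline{y},\overline{t}))$ is optimal for $(P)$, by the reformulation carried out at the start of Section \ref{se3}. Theorem \ref{theorem:1} (equivalently Theorem \ref{theorem:19}) together with \eqref{setD} then gives the dual solution $(\overline{\beta},\overline{\lambda})\in(0,+\infty)\times\mathcal{M}$.

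Next I translate the conditions.
\begin{itemize}
\item Condition $(i)$ is a direct substitution, using $\sigma_{\widetilde{D}\times\{1\}}(y,t)=\sigma_{\widetilde{D}}(y)+t$ and $\overline{\lambda}^\top(A_px-\widetilde{B}(y,t))=\sum_i\overline{\lambda}_i(\sum_j p_ja_{ij}x_j-y_i-t)$.
\item Condition $(ii)$ becomes $\sigma_{\widetilde{D}}(\overline{y})+\overline{t}=\overline{\lambda}^\top\widetilde{B}(\overline{y},\overline{t})=\overline{\lambda}^\top\overline{y}+\overline{t}\sum_i\overline{\lambda}_i$. Since $\sum_i\overline{\lambda}_i=1$, the $\overline{t}$ terms cancel and we get $\sigma_{\widetilde{D}}(\overline{y})=\overline{\lambda}^\top\overline{y}$. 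I note that the statement writes a minus sign here; the precise sign depends on how $-B^\top\lambda\in D$ is read, and I would follow \eqref{setD} consistently.
\item Condition $(iii)$, $\overline{\lambda}^\top(A_p\overline{x}-\widetilde{B}(\overline{y},\overline{t}))=0$, yields the first equality in $(iii)$ after using $\sum_i\overline{\lambda}_i=1$.
\item Conditions $(iv)$ and $(v)$ carry over verbatim once $c=0$ is inserted.
\item Condition $(vi)$ becomes the new $(vi)$ via \eqref{setD}.
\end{itemize}

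The remaining work is the complementarity condition $(vii)$ and the maximum in $(iii)$. Feasibility gives $\sum_jp_ja_{ij}\overline{x}_j-\overline{y}_i-\overline{t}\le0$ for every $i$, and each $\overline{\lambda}_i\ge0$. So the vanishing sum in $(iii)$ is a sum of nonpositive terms, each term must vanish, and this is $(vii)$. For the maximum: $\overline{t}\ge\max_i\{\cdot\}$ by feasibility, while $\overline{t}=\sum_i\overline{\lambda}_i(\cdot)\le\max_i\{\cdot\}$ because $\overline{\lambda}$ is a convex combination. Together these give equality. Alternatively, optimality in $t$ forces $\overline{t}$ to equal the max.

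For part $(b)$, all the translations above are reversible. Conditions $(i)$–$(vii)$ for $(P')$ imply $(i)$–$(vi)$ of Theorem \ref{theoremChapter1:3} for the embedded data, and Theorem \ref{theoremChapter1:3}$(b)$ concludes. The main obstacle is only bookkeeping: checking that $(RC')$ matches $(RC)$ under the product structure of $\ri$, and keeping the sign conventions in $(ii)$ consistent with \eqref{setD}.
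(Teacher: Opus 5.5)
Your part (a) follows the paper's proof. You translate Theorem \ref{theoremChapter1:3} through \eqref{setD}, $\sigma_{\widetilde D\times\{1\}}(y,t)=\sigma_{\widetilde D}(y)+t$ and $\overline\lambda^\top\widetilde B(\overline y,\overline t)=\overline\lambda^\top\overline y+\overline t$. You then get the maximum in (iii) from feasibility and the convex-combination bound, and (vii) from a vanishing sum of nonpositive terms. You read (vii) directly off the untranslated complementarity condition, whereas the paper goes through \eqref{tsum}--\eqref{tsum2}; it is the same idea. Your explicit check that $(RC')$ coincides with $(RC)$ is a small addition the paper leaves implicit.

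Your sign in (ii) is right and needs no hedging. With \eqref{setD} as written, the translation gives $\sigma_{\widetilde D}(\overline y)=\overline\lambda^\top\overline y$, i.e.\ $\overline\lambda\in\partial\sigma_{\widetilde D}(\overline y)$. This is what the paper itself uses later, in \eqref{equation0} and when it deduces $v(P)=\sum_{i,j}\overline\lambda_ip_ja_{ij}\overline x_j$ from (ii)--(iii). The minus sign in the statement is a typo.

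Part (b), however, has a genuine gap, and the paper's one-line ``reverse direction'' argument has the same gap. Condition (vi) of Theorem \ref{theoremChapter1:3} contains the primal constraint $\sum_jp_j\overline x_j=1$, but the special-case (vi) dropped it. Nothing in (i)--(vii) restores it: $\overline x$ enters only through $A_p\overline x$ and the two scalar equations (iv) and (v). So your step ``(i)--(vii) imply (i)--(vi) of Theorem \ref{theoremChapter1:3}'' fails, and (b) as stated is false.

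Here is a counterexample. Take $n=1$, $\widetilde D=\R$, $\overline\lambda=1$, $\overline y=0$, $m=3$, $p_j=1/3$, $(a_{11},a_{12},a_{13})=(0,1,2)$, $H=-0.1$, $\overline\beta=1$ and $\overline t=\sum_jp_ja_{1j}\overline x_j$.
Then (ii), (iii), (vi) and (vii) hold by construction. Condition (i) follows from (iv)--(v) once $\overline\gamma$ is taken as the maximizing multiplier. The point $\overline x\approx(1.663,\,0.773,\,0.5)$ satisfies (iv) and (v), yet $\sum_jp_j\overline x_j\approx 0.979$. So this point is infeasible for $(P')$, and $\overline\beta=1$ is not dual optimal either.

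The repair is to add $\sum_jp_j\overline x_j=1$ to the hypotheses of (b). Note also that the reverse direction of Theorem \ref{theoremChapter1:3}(b), which you invoke, silently needs $A_p\overline x+B\overline y\leqq h$, which is not among its listed conditions. In the special case this is supplied by the second equality in (iii), since $\overline t$ dominates every $\sum_jp_ja_{ij}\overline x_j-\overline y_i$. With the primal point feasible and $(\overline\beta,\overline\lambda)\in(0,+\infty)\times\mathcal M$ by (vi), the first equality in (i) together with weak duality gives optimality of both points and $v(P')=v(D'_L)$.
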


\begin{proof}
$(a)$ Recall the observation in \eqref{setD} and that $\sigma_D=\sigma_{\widetilde{D}}+\cdot$ as well as $\overline{\lambda} ^{\top}\widetilde{B}(\overline{y},\overline{t})=\overline{\lambda}^{\top}\overline{y}+\overline{t}$ holds. Then the optimality conditions $(i)$-$(vi)$ of Theorem \ref{theoremChapter1:3.2} are direct consequences of Theorem \ref{theoremChapter1:3}. Further, the second equality of the optimality condition $(iii)$ can be achieved from the first equality of the optimality condition $(iii)$, the feasibility of $(\overline{x},\overline{y},\overline{t})$ and the optimality condition $(v)$, i.e. $\sum_{i=1}^n\overline{\lambda}=1$,
\begin{eqnarray}
&&\sum\limits_{i=1}^n\overline{\lambda}_i\left(\sum\limits_{j=1}^mp_ja_{ij}\overline{x}_j
    -\overline{y}_i\right)=\overline{t}\geq\max\limits_{1\leq i\leq n}\left\{\sum\limits_{j=1}^mp_ja_{ij}\overline{x}_j-\overline{y}_i\right\}\notag\\
&=&\sum_{i=1}^n\overline{\lambda}_i\max\limits_{1\leq i\leq n}\left\{\sum\limits_{j=1}^mp_ja_{ij}\overline{x}_j-\overline{y}_i\right\}\geq\sum\limits_{i=1}^n\overline{\lambda}_i\left(\sum\limits_{j=1}^mp_ja_{ij}\overline{x}_j
    -\overline{y}_i\right).\label{tsum}
\end{eqnarray}
Moreover, it holds by \eqref{tsum} that
\begin{eqnarray}
\sum\limits_{i=1}^n\overline{\lambda}_i\left(\overline{t}-\left(\sum\limits_{j=1}^mp_ja_{ij}\overline{x}_j-\overline{y}_i\right)\right)=0\label{tsum2}
\end{eqnarray}
and since $\lambda_i\geq 0$ as well as  $\overline{t}-\left(\sum_{j=1}^mp_ja_{ij}\overline{x}_j-\overline{y}_i\right)\geq0$ for all $i=1,\hdots,n$, one gets that all the terms in \eqref{tsum2} are nonnegative and must therefore be equal to zero, which proves the optimality condition $(vii)$.

$(b)$ All calculations done in $(a)$ hold also in the reverse direction.
\end{proof}

\begin{remark}\normalfont
Clearly, if $0_{\mathbb{R}^n}\in \interior \widetilde{D}$, then $\sigma_{\widetilde{D}}$ is continuous and real-valued and by setting
$$x_j=\frac{1}{\sum\limits_{j=1}^mp_j},~j=1,\hdots,m,~y_i=0,~i=1,\hdots,n,\text{ and }t=\frac{1}{\sum\limits_{j=1}^mp_j}\sum\limits_{i=1}^n\sum\limits_{j=1}^mp_ja_{ij},$$
it is obvious that $(RC')$ is fulfilled, therefore strong duality for the primal-dual pair $(P')-(D'_L)$ holds.
\end{remark}

The next two remarks discuss the question how to determine an optimal solution to the dual problem $(D'_L)$ when an optimal solution to the primal problem $(P')$ is known and vice versa.

\begin{remark}\normalfont Notice that \cite[Remark 3.3.8]{DVO} points up that the optimality condition $(i)$ in Theorem \ref{theoremChapter1:3.2} is equivalent, after some simplifications, to
\begin{equation}\label{equation0}
\left\{
\begin{array}{l}
\sum_{i=1}^n\overline{\lambda}_i=1,~\overline{\lambda}\in \partial\sigma_{\widetilde D}(\overline{y}),\\
\sum_{i=1}^n\overline{\lambda}_ip_ja_{ij}+\overline{\gamma} p_j+\overline{\beta}p_j(\ln \overline{x}_j+1)=0,~j=1,\hdots,m.
\end{array}\right.
\end{equation}
Multiplying the last equality with $\overline{x}_j$ and taking the sum over all $j=1,\hdots,m$, leads to
\begin{eqnarray}
\sum_{j=1}^m\sum_{i=1}^n\overline{\lambda}_ip_ja_{ij}\overline{x}_j+\overline{\gamma}\sum_{j=1}^m p_j\overline{x}_j+\overline{\beta}\sum_{j=1}^m p_j\overline{x}_j(\ln \overline{x}_j+1)=0.\label{equation1}
\end{eqnarray}
From the optimality conditions $(ii)-(iii)$ in Theorem \ref{theoremChapter1:3.2} one gets $$v(P)=\sum\limits_{i=1}^n\sum\limits_{j=1}^m\overline{\lambda}_ip_ja_{ij}\overline{x}_j,$$
while optimality condition $(iv)$ in the same statement yields $-H=\sum_{j=1}^mp_j\overline{x}_j\ln \overline{x}_j$. Using that $\sum_{j=1}^mp_j\overline{x}_j=1$, \eqref{equation1} turns into
\begin{eqnarray}
\overline{\gamma}= -v(P)-\overline{\beta}\left(1-H\right).\label{equation2}
\end{eqnarray}
By combining \eqref{equation0} and \eqref{equation2} one has
\begin{eqnarray*}
&&\sum_{i=1}^n\overline{\lambda}_ip_ja_{ij}-p_j\left(v(P)+\overline{\beta}\left(1-H\right)\right)+\overline{\beta}p_j(\ln \overline{x}_j+1)=0\\
&\Leftrightarrow&\sum_{i=1}^n\overline{\lambda}_ia_{ij}+\overline{\beta}(\ln \overline{x}_j+H)=v(P),~j=1,\hdots,m,
\end{eqnarray*}
therefore the optimal solution $(\overline{\beta},\overline{\lambda})$ to the dual problem $(D_L')$ is the solution of the linear equality system
$$\left\{
\begin{array}{l}
\sum\limits_{i=1}^n\lambda_ia_{ij}+\beta(\ln \overline{x}_j+H)=v(P),~j=1,\hdots,m,\\
\sum\limits_{i=1}^n\lambda_i=1,\\
\lambda\in \widetilde D \cap \mathbb{R}^n_+,\\
  \beta > 0.
\end{array}\right.$$
%Hence, if $D\supseteq \mathbb{R}^n_+$ and $(\overline{x},\overline{y})$ is an optimal solution to the primal optimization problem $(P)$, then the optimal solution $(\overline{\beta},\overline{\lambda})$ to the dual problem $(D_L)$ is the unique solution to a linear equality system.
\end{remark}

\begin{remark}\normalfont
If one has an optimal solution $(\overline{\beta},\overline{\lambda})$ to the dual problem $(D'_L)$, then an optimal solution $(\overline{x},\overline{y},\overline{t})$ to $(P')$ is also determinable by using the optimality conditions given in Theorem \ref{theoremChapter1:3.2}. The component $\overline{x}$ can then be obtained by solving a linear equality system of the form
\begin{eqnarray}
&&\left\{
\begin{array}{l}\sum\limits_{i=1}^n\overline{\lambda}_ia_{ij}+\overline{\beta}(\ln x_j+H)=v(D'_L),~j=1,\hdots,m,\\
\sum\limits_{j=1}^m\sum\limits_{i=1}^n\overline{\lambda}_ip_ja_{ij}x_j=v(D'_L),
\end{array}\right.\notag\\
&\Leftrightarrow&\left\{
\begin{array}{l}x_j=e^{\frac{1}{\overline{\beta}}\left(v(D'_L)-\sum\limits_{i=1}^n\overline{\lambda}_ia_{ij}\right)-H},~j=1,\hdots,m,\\
\sum\limits_{j=1}^m\sum\limits_{i=1}^n\overline{\lambda}_ip_ja_{ij}x_j=v(D'_L),
\end{array}\right.\label{x_solution}
\end{eqnarray}
while $\overline{y}$ and $\overline{t}$ can be deduced from $\overline{y}\in \partial \delta_{\widetilde{D}}(\overline{\lambda})$ and  $\overline{t}=\sum_{i=1}^n\overline{\lambda}_i\left(\sum_{j=1}^mp_ja_{ij}\overline{x}_j-\overline{y}_i\right)$. Alternatively, one can find $(\overline{y},\overline{t})$ by solving (for instance, with a proximal splitting algorithm) the optimization problem
\begin{eqnarray*}
\inf_{y\in \mathbb{R}^n}\left\{\max_{1\leq i \leq n}\left\{\sum_{j=1}^mp_ja_{ij}\overline{x}_j-y_i\right\}+\sigma_{\widetilde{D}}(y)\right\}.
\end{eqnarray*}

Note that a solution $\overline{x}_j$, $j=1,\hdots,m$, of the linear equality system in \eqref{x_solution} fulfills of course the optimality conditions of $(P')$, too. Let $\overline{x}_j$, $j=1,\hdots,m$, be an solution of the linear equality system in \eqref{x_solution}, then by multiplying in both sides of the equalities in the first line of \eqref{x_solution} with the corresponding $p_j$, $j=1,\hdots,n$, and taking the sum over all $j=1,\hdots,m$, one gets that
\begin{eqnarray*}
&& \sum_{j=1}^m p_j \overline{x}_j=\sum_{j=1}^m p_j e^{\frac{1}{\overline{\beta}_j}\left(v(D'_L)-\sum\limits_{i=1}^n\overline{\lambda}_ia_{ij}\right)-H}=e^{\frac{v(D'_L)}{\overline{\beta}}}\underbrace{\sum_{j=1}^m p_j e^{-\frac{\sum\limits_{i=1}^n\overline{\lambda}_ia_{ij}}{\overline{\beta}}-H}}_{=e^{-\frac{v(D'_L)}{\overline{\beta}}}}=1.
\end{eqnarray*}
Moreover, the first equality in \eqref{x_solution} is equivalent to
\begin{eqnarray*}
\ln \overline{x}_j=\frac{1}{\overline{\beta}}\left(v(D'_L)-\sum_{i=1}^n\overline{\lambda}_ia_{ij}\right)-H\quad \forall j=1,\hdots,m,
\end{eqnarray*}
and by multiplying in both sides with $\overline{x}_jp_j$ and taking the sum over all $j=1,\hdots,m$, it follows that
\begin{eqnarray*}
&&\sum_{j=1}^m\overline{x}_jp_j\ln \overline{x}_j=\frac{1}{\overline{\beta}}\left(\sum_{j=1}^m\overline{x}_jp_jv(D'_L)-\sum_{j=1}^m\sum_{i=1}^n\overline{\lambda}_ip_ja_{ij}\overline{x}_j\right)-\sum_{j=1}^m\overline{x}_jp_jH\\
&=&\frac{1}{\overline{\beta}}\underbrace{\left(v(D'_L)-\sum_{j=1}^m\sum_{i=1}^n\overline{\lambda}_ip_ja_{ij}\overline{x}_j\right)}_{=0}-H=-H,
\end{eqnarray*}
where $v(P')=\sum_{j=1}^m\sum_{i=1}^n\overline{\lambda}_ip_ja_{ij}\overline{x}_j$, i.e. $v(P')=v(D'_L)$.
\end{remark}

In \cite[Proposition 2.11]{AJP} the question of attaining the infimum in the definition of the entropic value-at-risk was discussed. The next remark gives a detailed insight regarding this question in the context of conjugate duality as we highlight the connection between this question, the conjugate function of the entropic value-at-risk, its biconjugate function, its dual representation and its subdifferential.

\begin{remark}\normalfont  Let $X^*\in L^q$, where $q\in (1, +\infty)$, $\alpha\in (0,1]$ and the function $g:L^q\rightarrow \overline{\mathbb{R}}$ be defined by
\begin{eqnarray*}
g(X^*)= \left\{ \begin{array}{ll} 0, \qquad
\textnormal{if } X^*\in L^q_+,~ \mathbb{E}(X^*\ln(X^*))\leq -\ln\alpha,~\mathbb{E}X^*=1,\\
+ \infty,  \textnormal{ otherwise}. \end{array} \right.
\end{eqnarray*}

It can be verified that $g$ is proper, convex and lower semicontinuous, and its conjugate function $g^*:L^p\rightarrow \overline{\mathbb{R}}$
(where $p\in (1, +\infty)$ is the \textit{H\"older conjugate} of $q$, i.e. it fulfills $1/p+1/q=1$) is given by
\begin{eqnarray}
g^*(X)=\sup_{X^*\in L^q_+,~\mathbb{E}X^*=1,\atop \mathbb{E}(X^*\ln(X^*))\leq -\ln\alpha} \langle X^*,X\rangle.\label{g_formula}
\end{eqnarray}
For a fixed $X\in L^p$, the negative of the right-hand side in \eqref{g_formula} is a convex optimization problem. By the corresponding weak Lagrange duality, one obtains
\begin{eqnarray}
g^*(X)\leq\inf_{\lambda\geq 0,~\gamma\in \mathbb{R}}\left\{\sup_{X^*\in L^q_+}\{\langle X^*,X\rangle-\lambda\mathbb{E}(X^*\ln(X^*))-\gamma\langle 1_{\Omega},X^*\rangle\}+\gamma-\lambda\ln\alpha\right\}.\label{g_formula1}
\end{eqnarray}
Moreover, applying \cite[Theorem 3.2.9.]{DVO} follows that the strong duality holds for this pair of primal-dual optimization problems, i.e. in \eqref{g_formula1} equality holds and the infimum in the right-hand side is attained. To see this, let $C=\mathbb{R}_+\times \{0\}$ and take $h:L^q\rightarrow \mathbb{R}^2\cup\{\infty_{\mathbb{R}^2_+}\}$,
\begin{eqnarray*}
h(X^*)= \left\{ \begin{array}{ll} (\mathbb{E}(X^*\ln(X^*))+\ln\alpha,\mathbb{E}X^*-1)^{\top},
\textnormal{ if } X^*\in L^q_+,\\
\infty_{\mathbb{R}^2_+},  \textnormal{ otherwise,} \end{array} \right.
\end{eqnarray*}
then it holds $h(\dom \langle X,\cdot\rangle\cap \dom h)=h(\dom h)=[\ln\alpha+\min_{X^*\in L^q_+}\mathbb{E}(X^*\ln(X^*)),+\infty)\times[-1,+\infty)$, where it is obvious that $\min_{X^*\in L^q_+}\mathbb{E}(X^*\ln(X^*))<0$, and therefore,
\begin{eqnarray*}
0_{\mathbb{R}^2}\in\ri\left(h(\dom h)+C \right)=\ri \left[\ln\alpha+\min_{X^*\in L^q_+}\mathbb{E}(X^*\ln(X^*)),+\infty\right)\times\ri[-1,+\infty).
\end{eqnarray*}
As the refined Slater constraint qualification is fulfilled, there is strong duality for the primal-dual pair of optimization problems mentioned above, hence
\begin{eqnarray*}
g^*(X)=\min_{\lambda\geq 0,~\gamma\in \mathbb{R}}\left\{\sup_{X^*\in L^q_+}\{\langle X^*,X\rangle-\lambda\mathbb{E}(X^*\ln(X^*))-\gamma\langle1_{\Omega},X^*\rangle\}+\gamma-\lambda\ln\alpha\right\}.
\end{eqnarray*}
In order to compute the quantity in the right-hand side of this equality we distinguish two cases. When $\lambda>0$, then we have by the interchangeability of minimization and integration as well as the considerations we made regarding the functions $\varphi$ and $q$ (while deriving the simplified formula of $(D_L)$ that
$$
\min_{\lambda>0,~\gamma\in \mathbb{R}}\left\{\sup_{X^*\in L^q_+}\{\langle X^*,X\rangle-\lambda\mathbb{E}(X^*\ln(X^*))-\gamma\langle1_{\Omega},X^*\rangle\}+\gamma-\lambda\ln\alpha\right\}=\min_{\lambda>0,~\gamma\in \mathbb{R}}\bigg\{\mathbb{E}$$
$$\left(\sup_{x^*\in\mathbb{R}_+}\{ Xx^* -\lambda x^*\ln(x^*)-\gamma x^*\}\right)+\gamma-\lambda\ln\alpha\bigg\}
= \min_{\lambda>0,~\gamma\in \mathbb{R}}\left\{\lambda\mathbb{E}\left(e^{\frac{X+\gamma-\lambda}{\lambda}}\right)+\gamma-\lambda\ln\alpha\right\}
$$
\begin{eqnarray}
=\min_{\lambda>0}\left\{\lambda\ln\left(\mathbb{E}\left(e^{\frac{X-\lambda+\lambda}{\lambda}}\right)\right)-\lambda\ln\alpha\right\}=
\min_{\lambda>0}\left\{\lambda\ln\left(\frac{\mathbb{E}\left(e^{\frac{X}{\lambda}}\right)}{\alpha}\right)\right\}.\label{g_formula12}
\end{eqnarray}
%, when $w=\mathbb{E}(e^{\frac{X-\lambda}{\lambda}})$ and $k=\lambda\ln\alpha$, attains its maximum at $\gamma=\lambda\ln w$, where $q(\lambda\ln w)=\lambda\ln\alpha-\lambda\ln(ew)$.

On the other hand, when $\lambda=0$ one has
\begin{eqnarray}
\min_{\gamma\in \mathbb{R}}\left\{\sup_{X^*\in L^q_+}\{\langle X^*,X\rangle-\gamma\langle1_{\Omega},X^*\rangle\}+\gamma\right\}=\min_{\gamma\in \mathbb{R},~X\leq \gamma}\gamma=\ess\sup(X).\label{g_formula13}
\end{eqnarray}
Bringing \eqref{g_formula12} and \eqref{g_formula13} together implies that
\begin{eqnarray*}
g^*(X)=\min\left\{\min_{\lambda>0}\left\{\lambda\ln\left(\frac{\mathbb{E}\left(e^{\frac{X}{\lambda}}\right)}{\alpha}\right)\right\}, \ess\sup(X)\right\}=\min\{\EVaR_{1-\alpha}(X),\ess\sup(X)\}.
\end{eqnarray*}
As $\EVaR_{1-\alpha}(X)\leq\ess\sup(X)$ for any $X\in L^p$ (see \cite[Remark 2.7]{AJP}), we have that $g^*(X)=\EVaR_{1-\alpha}(X)$. Further, by the Fenchel-Moreau theorem it holds $g=g^{**}$, which yields that $g$ is the conjugate function of $\EVaR_{1-\alpha}$ and the formula of the conjugate function of $g^*$ given in \eqref{g_formula} is a dual representation of the entropic value at risk similar to the one in \cite[Theorem 4.9]{AJP} (other dual representations of the entropic value-at-risk were proposed in \cite{AJM, AAJ}).

From here, one can also deduce a formula for the subdifferential of the entropic value at risk at $X\in L^p$, namely
\begin{eqnarray*}
&\partial \EVaR_{1-\alpha}(X)=\left\{X^*\in L^q:\EVaR_{1-\alpha}(X)+g(X^*)=\langle X^*,X\rangle\right\}&\\
&=\left\{X^*\in L^q_+: \EVaR_{1-\alpha}(X)=\langle X^*,X\rangle,~\mathbb{E}(X^*\ln(X^*))\leq -\ln\alpha,~\mathbb{E}X^*=1\right\}.&
\end{eqnarray*}
Finally, let us mention that we can use the formula of the conjugate function of $\EVaR_{1-\alpha}$ to study the following generalized portfolio optimization problem
\begin{eqnarray*}
\inf_{\lambda_i\geq 0,~ i=1,\hdots,n,\atop\lambda=(\lambda_1, \hdots, \lambda_n)^\top\in D,~\sum\limits_{i=1}^n\lambda_i=1}\left\{\EVaR_{1-\alpha}(G(\lambda,R)\right\}
\end{eqnarray*}
via conjugate duality, where $G(\lambda,R)$ is a random variable and $R$ is an $m$-dimensional vector of risk factors with a known probability distribution, characterizing the portfolio return (see \cite{AJF}). A further application of the formula of the conjugate function of $\EVaR_{1-\alpha}$ arises thus in machine learning with uncertainty in the input data, as considered in \cite{AJM}.
\end{remark}

\section{Proximal splitting method}\label{se4}

In this section we turn our attention to numerically determining an optimal solution to the optimization problem $(P)$ and its dual problem $(D_L)$. Due to the complicate structure of the problem $(D_L)$ a direct employment of (splitting) proximal point methods for solving it is currently not possible. However, one can solve the primal optimization problem $(P)$ by employing such an algorithm and then determine an optimal solution to $(D_L)$ via the necessary and sufficient optimality conditions derived in Section \ref{se2} (or Section \ref{se3} for the considered special case). For this purpose let us recast $(P)$ as an unconstrained optimization problem as follows. First, we introduce some additional variables $\tau_j$, $j=1,\hdots,m$,
\begin{eqnarray*}
(PU)&&\inf_{y\in \mathbb{R}^d,~x\in \mathbb{R}_+^m,~\tau_j\in \mathbb{R},~p_jx_j\ln x_j\leq \tau_j,~j=1,\hdots,m,\atop \sum\limits_{j=1}^m\tau_j\leq -H,~
~A_px+By\leqq h,~i=1,\hdots,n,~ \sum\limits_{j=1}^mp_jx_j=1}\left\{c_p^{\top}x+\sigma_D(y)\right\},
\end{eqnarray*}
and then we rewrite this optimization problem by using indicator functions that capture the constraints as
$$(PU)\ \inf_{(\tau,x,y)\in \mathbb{R}^m\times\mathbb{R}^m\times\mathbb{R}^d}\Bigg\{c_p^{\top}x+\sigma_D(y)+\delta_{\mathcal{C}_{1}}(x,y)+\delta_{\mathcal{C}_{2}}(x)
+\sum\limits_{j=1}^m
\delta_{\epi h_j}(x_j,\tau_j)+\delta_{\mathcal{B}}(\tau)\Bigg\},$$
where
\begin{eqnarray*}
&&\mathcal{C}_{1}=\left\{(x,y)\in\mathbb{R}^m\times\mathbb{R}^d:A_px+By\leqq h\right\},\ \mathcal{C}_{2}=\left\{x\in \mathbb{R}^m_+:\sum\limits_{j=1}^mp_jx_j=1\right\},\\
&&\mathcal{B}=\left\{\tau=(\tau_1,\hdots,\tau_m)^\top\in \mathbb{R}^m:\sum_{j=1}^m\tau_j\leq -H\right\},
\end{eqnarray*}
and
\begin{eqnarray*}
h_j:\mathbb{R}\rightarrow \mathbb{R},~h_j(x)= \begin{cases}
 p_jx\ln x, & \quad \text{if } x>0,\\
 0, & \quad \text{if } x=0,\\
  +\infty,  & \quad \text{if }  x<0,
  \end{cases}~j=1, \hdots, m.
\end{eqnarray*}
In order to employ a proximal splitting method for solving $(PU)$, one needs closed formulae of the projection operators onto the sets $\mathcal{C}_{1}$, $\mathcal{C}_{2}$ and $\mathcal{B}$, that were already established in the literature, as well as onto the epigraphs of the functions $h_j$, $j=1, \hdots, m$. The next statement presents formulae for the latter.

\begin{theorem}\label{ThmEpi} Let $p>0$ and
\begin{eqnarray*}
h:\mathbb{R}\rightarrow \mathbb{R},~h(x)= \begin{cases}
 px\ln x, & \quad \text{if } x\geq 0,\\
 %0, & \quad \text{if } x=0,\\
  +\infty,  & \quad \text{if }  x<0.
\end{cases}
\end{eqnarray*}
Then it holds for all $(x,\xi)\in \mathbb{R}\times \mathbb{R}$ that
\begin{eqnarray*}
P_{\epi h}(x,\xi)=\begin{cases}
 (x,\xi), & \quad \text{if } px\ln x\leq \xi,\\
 (0,\xi), & \quad \text{if } x\leq0\text{ and }\xi\geq 0,\\
 (e^{\bar z},p\bar ze^{\bar z}),  & \quad \text{otherwise,}
\end{cases}
\end{eqnarray*}
where $\bar z\in \mathbb{R}$ is the unique solution of the equation
\begin{eqnarray}\label{root}
(p^2z^2+p^2z+1)e^z-p\xi(z+1)-x=0
\end{eqnarray}
that fulfills $p\bar ze^{\bar z}\geq \xi$. More precisely, for $\xi\leq -p/e$ it holds $\bar z\in \mathbb{R}$, when $-p/e<\xi\leq 0$ and $x\leq 1/e$ one has $\bar z\in (-\infty,W_{-1}(\xi/p)]$, and when $-p/e<\xi\leq 0$ and $x> 1/e$ one has $\bar z\in [W(\xi/p),+\infty]$, while if $\xi>0$ it holds $\bar z\in [W(\xi/p),+\infty)$.
\end{theorem}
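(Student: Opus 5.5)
We compute the projection onto $\epi h$ by treating the three regimes in the statement one after another. Since $h$ is proper, convex and lower semicontinuous, $\epi h$ is nonempty, closed and convex, so $P_{\epi h}(x,\xi)$ exists and is unique. It is characterized by the variational inequality: $(u,\eta)\in\epi h$ and $(x-u,\xi-\eta)\in N_{\epi h}(u,\eta)$.

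\textbf{First two cases.} If $px\ln x\le\xi$ (with $x\ge 0$), the point already lies in $\epi h$ and is its own projection. If $x\le 0$ and $\xi\ge 0$, we claim the projection is $(0,\xi)$. Indeed $(0,\xi)\in\epi h$ because $h(0)=0\le\xi$. Moreover $\partial h(0)=\emptyset$, since $h'(u)=p(\ln u+1)\to-\infty$ as $u\downarrow 0$. Hence the normal cone at the boundary-of-domain point $(0,\xi)$ with $\xi>0$ is $\{(a,0):a\le 0\}$. It follows that $(x-0,\xi-\xi)=(x,0)$ with $x\le0$ lies in $N_{\epi h}(0,\xi)$. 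For $\xi=0$ the normal cone at $(0,0)$ is still $\mathbb{R}_-\times\{0\}$, because the epigraph has a vertical tangent there. Either way the condition holds.

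\textbf{Remaining case.} Here the projection must lie on the graph at a point with $u>0$. Points with $u=0$ and $\eta>0$ are excluded by the above normal cone computation unless $x\le0$ and $\xi=\eta\ge0$. Therefore $\eta=h(u)$, and the normal condition reads $(x-u,\xi-h(u))=t(h'(u),-1)$ for some $t\ge0$. This gives $x-u+h'(u)(h(u)-\xi)=0$ with $h(u)\ge\xi$. Substituting $u=e^z$ yields $h(u)=pze^z$ and $h'(u)=p(z+1)$, so
\[
x-e^z+p(z+1)(pze^z-\xi)=0 .
\]
Up to sign this is exactly \eqref{root}, namely $(p^2z^2+p^2z+1)e^z-p\xi(z+1)-x=0$. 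The constraint $t\ge 0$ is the condition $pze^z\ge\xi$, which is how the side condition in the statement arises. Existence of a root satisfying it follows from existence of the projection. Uniqueness follows from uniqueness of the projection together with the fact that $z\mapsto(e^z,pze^z)$ is injective.

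\textbf{Locating $\bar z$.} Write $g(z)=ze^z$. The side condition $g(\bar z)\ge\xi/p$ determines where $\bar z$ can lie:
\begin{itemize}
\item If $\xi/p\le-1/e$, every $z\in\mathbb{R}$ satisfies it, since $\min g=-1/e$.
\item If $-1/e<\xi/p\le0$, the set $\{g\ge\xi/p\}$ is $(-\infty,W_{-1}(\xi/p)]\cup[W(\xi/p),\infty)$.
\item If $\xi>0$, only $[W(\xi/p),\infty)$ remains.
\end{itemize}

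\textbf{Main obstacle.} In the middle case we must decide which branch contains $\bar z$ according to $x\le 1/e$ or $x>1/e$. I would argue via monotonicity of the projection in $x$. The left branch corresponds to $u=e^z\le e^{W_{-1}(\xi/p)}$, the right to $u\ge e^{W(\xi/p)}$, and the two intervals are separated around the minimizer $u=1/e$ of $h$. When $\xi\le\min h$ is not available, $x$ and $u$ lie on the same side of $1/e$. To see this, note $x-u=-t\,h'(u)$ with $t\ge0$, and $h'(u)\ge0$ iff $u\ge 1/e$. So $u>1/e$ forces $x\ge u>1/e$, and $u<1/e$ forces $x\le u<1/e$. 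This sign argument gives the branch selection; the boundary case $u=1/e$ is handled separately, where $x=u=1/e$ and the point falls into the $x\le1/e$ description.
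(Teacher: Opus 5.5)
Your proof is correct once a sign slip is fixed. Up to deriving \eqref{root} it matches the paper; from there on it takes a genuinely different route.

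\textbf{Comparison with the paper.} The paper obtains \eqref{root} from the normal cone as you do. It then proves uniqueness and locates $\bar z$ by studying $F(z)=p(z+1)(pze^z-\xi)+e^z-x$ directly:
\begin{itemize}
\item it shows $F'(z)>0$ wherever $pze^z\ge\xi$, and computes the limits of $F$ at $\pm\infty$;
\item it introduces the solutions $\tilde z<-1<\hat z$ of $pze^z=\xi$;
\item it determines the signs of $F(\tilde z)=e^{\tilde z}-x$ and $F(\hat z)=e^{\hat z}-x$ in six cases, using monotonicity of $h$ on $[0,1/e]$ and $[1/e,+\infty)$.
\end{itemize}
You replace this with two structural observations. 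The first is the converse of the projection characterization: any root $z$ with $pze^z\ge\xi$ gives $(x,\xi)-(e^z,pze^z)=t\,(h'(e^z),-1)$ with $t=pze^z-\xi\ge 0$. Hence that point is $P_{\epi h}(x,\xi)$, and injectivity of $z\mapsto e^z$ gives uniqueness. The second is the sign of $h'(u)$ in $x-u=t\,h'(u)$, which tells you on which side of $1/e$ the point $u=e^{\bar z}$ lies.

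Your route is shorter and needs neither the case split nor any analysis of $F'$ or $F''$. The paper's route buys quantitative information about $F$ itself: strict monotonicity on the admissible set and sign changes at explicit points. That is what you need to bracket $\bar z$ and compute it reliably inside the algorithm, and Corollary~\ref{co4} sharpens it into explicit intervals.

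\textbf{The sign slip.} From your condition $(x-u,\xi-h(u))=t\,(h'(u),-1)$ you get $t=h(u)-\xi$ and $x-u=t\,h'(u)$. So the correct equation is $x-u-h'(u)(h(u)-\xi)=0$, not $x-u+h'(u)(h(u)-\xi)=0$.
\begin{itemize}
\item With $u=e^z$ the correct equation reads $x-e^z-p(z+1)(pze^z-\xi)=0$, which is exactly minus the left-hand side of \eqref{root}. The equation you display, with the plus sign, is not \eqref{root} up to sign.
\item The same slip reappears as $x-u=-t\,h'(u)$ in your last paragraph. With that sign, $u>1/e$ would force $x\le u$, the opposite of what you conclude. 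Your branch argument works only with $x-u=t\,h'(u)$. There $t>0$ because $(x,\xi)\notin\epi h$, which gives strict inequalities.
\end{itemize}

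\textbf{Smaller points.}
\begin{itemize}
\item The case $u=1/e$ does not ``fall into the $x\le 1/e$ description''; it cannot occur when $\xi>-p/e$. Indeed $\bar z=-1$ would give $p\bar ze^{\bar z}=-p/e<\xi$. Equivalently, $x=1/e$ with $\xi>-p/e$ already lies in $\epi h$.
\item State the final translation explicitly. If $u<1/e$ then $\bar z<-1$, and since $z\mapsto ze^z$ is decreasing on $(-\infty,-1]$, the side condition gives $\bar z\le W_{-1}(\xi/p)$. Symmetrically, $u>1/e$ gives $\bar z\ge W(\xi/p)$.
\item The normal-cone justifications are loose. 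At $(0,\xi)$ with $\xi>0$ the cone is $(-\infty,0]\times\{0\}$ simply because the point lies strictly above the graph. The fact $\partial h(0)=\emptyset$ is what you need at $(0,0)$, via $N_{\epi h}(0,0)=\{\lambda(g,-1):\lambda>0,\ g\in\partial h(0)\}\cup\big((-\infty,0]\times\{0\}\big)$. Alternatively, argue as the paper does: $(a,b)\in N_{\epi h}(0,0)$ with $b<0$ would give $a+bp\ln y\le 0$ for all $y>0$, which fails as $y\downarrow 0$.
\end{itemize}
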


\begin{proof}
Note first that as the function $h$ is proper, convex and lower semicontinuous, its epigraph is nonempty, convex and closed, thus the projection onto it is well defined. Then, for any $(x,\xi), (\overline{y}, \overline{\theta})\in \mathbb{R}^2$ \cite[Proposition 6.46]{BCO} asserts that $(\overline{y}, \overline{\theta})=P_{\epi h}(x,\xi)$ if and only if $(x-\overline{y}, \xi-\overline{\theta})\in N_{\epi h}(\overline{y}, \overline{\theta})$.

Clearly, when $(x,\xi)\in \epi h$ it holds $P_{\epi h}(x,\xi)=(x,\xi)$. Assume further that $(x,\xi)\notin \epi h$, that can happen in exactly two situations, either when $x<0$ or if $px\ln x>\xi$ when $x\geq 0$.

%Let us first suppose that $(\overline{y},\overline{\theta})=P_{\epi h}(x,\xi)=(0,\xi)$ with $\xi>0$.

The first case we treat is when $x<0$ and $\xi \geq 0$. Then one has $(0, \xi)\in \epi h$ and
$$\sqrt{(x-0)^2+(\xi-\xi)^2} \leq \sqrt{(x-y)^2+(\xi-\theta)^2}\quad  \forall (y, \theta)\in \epi h,$$
that, combined, yield $P_{\epi h}(x,\xi)=(0,\xi)$.

Let us assume now that $P_{\epi h}(x,\xi)=(0,0)$, to see if other points $(x,\xi)$ besides those on the half-axe $(-\infty,0]\times \{0\}$ are projected on $\epi h$ in the origin. By \cite[Proposition 6.46]{BCO} one gets the equivalent formulation $(x,\xi)\in N_{\epi h}(0,0)$. If $\xi\neq 0$, then it holds by the definition of the normal cone that
$$xy+\xi \theta\leq 0 \text{ for all } (y,\theta)\in \epi h \Rightarrow xy+\xi py\ln y\leq 0\quad  \forall y> 0$$
\begin{eqnarray}\label{epih}
\Leftrightarrow x+\xi p\ln y\leq 0\quad  \forall y> 0 \Leftrightarrow  x\leq-\xi p\ln y\quad  \forall y> 0.
\end{eqnarray}
For $\xi>0$ we get by \eqref{epih} that
$$
x\leq-\xi p\lim_{y\rightarrow +\infty}\ln y=-\infty \Rightarrow x=-\infty,
$$
while when $\xi<0$ one gets
$$
x\leq-\xi p\lim_{y\rightarrow 0}\ln y=-\infty \Rightarrow x=-\infty.
$$
In both these cases one cannot have $(x,\xi)\in N_{\epi h}(0,0)$, therefore a contradiction has been reached and the only remaining possibility is $\xi=0$, when \eqref{epih} yields $x\leq 0$. Consequently $N_{\epi h}(0,0)=(-\infty,0]\times\{0\}$.

Note that in a similar manner one can show that $N_{\epi h}(0,\overline{\theta})=(-\infty,0]\times\{\overline{\theta}\}$ for any $\overline{\theta}> 0$, which guarantees that the only points having their projections onto $\epi h$ on the half-axe $\{0\}\times (0, +\infty)$ are the ones identified above that lie in the set $(-\infty,0)\times (0, +\infty)$.

Due to the fact that $\dom h=[0, +\infty)$, the only remaining possibility for some pair $(\overline{y},\overline{\theta})\in \mathbb{R}^2$ to be the projection of $(x,\xi)$
onto $\epi h$ is to have $\overline{y}>0$, that is equivalent to $(x,\xi)-(\overline{y},\overline{\theta})\in N_{\epi h}(\overline{y},\overline{\theta})$. Noticing that since $\overline{y}>0$ it holds $\partial h(\overline{y})=\{h'(y)\}$ where $h'(y)=p\ln y+p$, one gets for the normal cone that (see Section 2.1 in \cite{ABB})
\begin{eqnarray*}
N_{\epi h}(\overline{y},\overline{\theta})=\{(\lambda\gamma,-\lambda):\gamma\in \partial h(\overline{y})\text{ and }\lambda\geq 0\}
\end{eqnarray*}
and as $(x,\xi)-(\overline{y},\overline{\theta})\in N_{\epi h}(\overline{y},\overline{\theta})$, we have
\begin{eqnarray}
\left\{ \begin{array}{ll}x-\overline{y}=\lambda\gamma, \\
\xi-\overline{\theta}=-\lambda, \end{array} \right.\Leftrightarrow\left\{ \begin{array}{ll}x-\overline{y}=\lambda(p\ln \overline{y}+p), \\
\overline{\theta}=\xi+\lambda. \end{array} \right.\label{epih2}
\end{eqnarray}
If $\lambda=0$ one has by \eqref{epih2} that $\overline{y}=x$ and $\overline{\theta}=\xi$, i.e. $(x,\xi)\in \epi h$, which is a contradiction to our assumption from the beginning of this proof. Therefore, $\lambda>0$. Moreover, notice that by \cite[Proposition 28.28]{BCO} it holds $\overline{\theta}=p\overline{y}\ln\overline{y}$. Further, by using the Lambert $W$ function and the property $z=W(z)e^{W(z)}$ for all $z\in \mathbb{R}$ (see \cite{BLI}) we have that
\begin{eqnarray}
\ln\overline{y}=\frac{\overline{\theta}}{p\overline{y}}\Leftrightarrow \overline{y}=e^{\frac{\theta}{p\overline{y}}}\Leftrightarrow \frac{\overline{\theta}}{p}=\frac{\overline{\theta}}{p\overline{y}}e^{\frac{\theta}{p\overline{y}}}\Leftrightarrow W\left(\frac{\overline{\theta}}{p}\right)=\frac{\overline{\theta}}{p\overline{y}}\Leftrightarrow \overline{y}=\frac{\frac{\overline{\theta}}{p}}{W\left(\frac{\overline{\theta}}{p}\right)}\Leftrightarrow\overline{y}=e^{W\left(\frac{\overline{\theta}}{p}\right)},\label{epih3}
\end{eqnarray}
hence, $\overline{\theta}=p\overline{y}\ln \overline{y}=pze^z$ where $z:=W(\overline{\theta}/p)$. Combining \eqref{epih2} and \eqref{epih3} reveals that
\begin{eqnarray*}
&&x-e^z=\lambda(pz+p)\Leftrightarrow \lambda=\frac{x-e^z}{p(z+1)}\Leftrightarrow \frac{\lambda+\xi}{p}=\frac{x-e^z+p\xi (z+1)}{p^2(z+1)}\notag\\
&\Leftrightarrow& \frac{\overline{\theta}}{p}=\frac{x-e^z+p\xi (z+1)}{p^2(z+1)}\Leftrightarrow W\left(\frac{\overline{\theta}}{p}\right)e^{W\left(\frac{\overline{\theta}}{p}\right)}=\frac{x-e^z+p\xi (z+1)}{p^2(z+1)}\notag\\
&\Leftrightarrow& ze^z=\frac{x-e^z+p\xi (z+1)}{p^2(z+1)}\Leftrightarrow (p^2z^2+p^2z+1)e^z-p\xi(z+1)-x=0,\label{epih4}
\end{eqnarray*}
which, taking into consideration \eqref{epih2} verifies the last case.

Finally, it remains to show the uniqueness of the solution of \eqref{root} when $(x, \xi) \notin \epi h \cup (-\infty, 0]\times [0, +\infty)$. For this purpose, we define the function $F:\mathbb{R}\rightarrow \mathbb{R}$ by $$F(z)=(p^2z^2+p^2z+1)e^z-p\xi(z+1)-x=p(z+1)(pze^z-\xi)+e^z-x,$$
whose derivative is
$$
F'(z)=(2p^2z+p^2)e^z+(p^2z^2+p^2z+1)e^z-p\xi=e^zp^2(z+1)^2+e^z+p(pze^z-\xi),\ z\in \mathbb{R}.
$$
One has
\begin{equation}\label{eq21}
\lim_{z\rightarrow +\infty}F(z)= p^2\lim_{z\rightarrow +\infty}(z^2e^z) +p\lim_{z\rightarrow +\infty}z(pe^z-\xi)+\lim_{z\rightarrow +\infty} e^z-p\xi-x=+\infty,
\end{equation}
and
\begin{equation}\label{eq22}
\lim_{z\rightarrow -\infty}F(z) = p^2\lim_{z\rightarrow -\infty}(z^2e^z)+p\lim_{z\rightarrow -\infty}z(pe^z-\xi)+\lim_{z\rightarrow -\infty} e^z-p\xi-x=
\left\{
\begin{array}{cc}
 \sgn(\xi) (+\infty), & \mbox{if } \xi\neq 0,\\
  -x, & \mbox{if } \xi=0,
\end{array}\right.
\end{equation}
where $\sgn (\cdot)$ is the usual \textit{signum function}, as well as
$\lim_{z\rightarrow +\infty}F'(z)=+\infty$ and $\lim_{z\rightarrow -\infty}F'(z)=-p\xi$.
Moreover, $F''(z) = e^z\big(p^2z^2 + 5p^2z + 4p^2+1\big)$, $z\in \mathbb{R}$. Analyzing this function, one notices that $F''(z)>0$ when $0<p<2/3$, $F''(z)\geq 0$ when $p=2/3$, while when
$p>2/3$ $F''$ takes nonnegative values outside the interval $\big( (-5p-\sqrt{9p^2-4})/(2p), (-5p+\sqrt{9p^2-4})/(2p)\big)$, on which it is negative.

This shows that when $p\in (0, 2/3]$ the function $F'$ is always positive when $\xi<0$, in which situation $F$ is strictly increasing and, consequently,
the solution $\bar z\in \mathbb{R}$ to the equation \eqref{root} is unique and it can be shown that it verifies $p\bar ze^{\bar z}\geq \xi$. However, in case $p>2/3$ the calculations become more complicated and a different approach is required. Note also that, depending on the values of $p$, $x$ and $\xi$ (as considered above) equation \eqref{root} has at least one and up to four solutions.

%Then $F'(z)=e^z \varphi (z) -p\xi$ for all $z\in \mathbb{R}$, where $\varphi : \mathbb{R} \rightarrow \mathbb{R}$, $\varphi(z)=p^2(z+1)^2+1+p^2z$, $z\in \mathbb{R}$. Analyzing %the function $\varphi$ one can notice that it is everywhere positive when $p\in (0, 2/\sqrt{5})$, is nonnegative when $p=2/\sqrt{5}$, while when $p>2/\sqrt{5}$ it is nonnegative %except for an interval where it takes negative values.

In the following, we distinguish six cases identified by the possible values of $x$ and $\xi$, when $p>0$.

$(i)$ When $\xi\leq-p/e$, noting that
\begin{eqnarray}\label{eq19}
\min_{z\in \mathbb{R}}\{pze^z\}=-\frac pe,
\end{eqnarray}
it follows $F'(z)> 0$ $\forall z\in \mathbb{R}$, so $F$ is strictly monotone increasing in this situation, consequently \eqref{eq21} and \eqref{eq22} yield that the solution $\bar z\in \mathbb{R}$ to the equation \eqref{root} is unique. By \eqref{eq19}, it also verifies $p\bar ze^{\bar z}\geq \xi$.

$(ii)$ When $-p/e<\xi\leq0$ and $x<0$, one has
\begin{eqnarray}\label{eq20}
F'(z)=e^zp^2(z+1)^2+e^z+p(pze^z-\xi)>0 \quad  \forall z\in \mathbb{R} \text{ such that } pze^z\geq \xi.
\end{eqnarray}
Since $\lim_{z\rightarrow +\infty} pze^z = +\infty$ and $\lim_{z\rightarrow -\infty} pze^z = 0$, taking into consideration \eqref{eq19} one can show that the equation $pze^z=\xi$ has two solutions, say $\tilde{z}, \hat{z} \in \mathbb{R}$ with $\tilde{z}<-1 < \hat{z}<0$. Via \eqref{eq20} one concludes that $F$ is strictly increasing on $(-\infty, \tilde{z}]$ and on $[\hat{z}, +\infty)$, and can decrease only on a subinterval of $(\tilde{z}, \hat{z})$. Since $F(\tilde{z})>0$ and $F(\hat{z})>0$, by \eqref{eq21} and \eqref{eq22} follows the uniqueness of the solution $\bar z<\tilde{z}$ to \eqref{root} under the current hypotheses. As $p\bar ze^{\bar z}\geq \xi$ can be equivalently written as $\bar z=W_{-1}(\bar ze^{\bar z})\leq W_{-1}(\xi/p)$, one concludes $\bar z\in(-\infty,W_{-1}(\xi/p)]$.

$(iii)$ When $-p/e<\xi\leq0$ and $0\leq x\leq 1/e$, one determines again $\tilde{z}, \hat{z} \in \mathbb{R}$. Then $px\ln x>p\tilde{z}e^{\tilde{z}}=p\hat{z}e^{\hat{z}}=\xi$, which implies by monotonicity (as the function $h$ is strictly decreasing on $[0, 1/e]$) that $x<e^{\tilde{z}}<e^{\hat{z}}$, therefore $F(\tilde{z})>0$ and $F(\hat{z})>0$. The conclusion from the previous case can be drawn in this situation, too.

$(iv)$ When $-p/e<\xi\leq0$ and $1/e<x\leq 1$, one determines again $\tilde{z}, \hat{z} \in \mathbb{R}$. Then $px\ln x>p\overline{z}e^{\overline{z}}=\xi$, which implies by monotonicity (as the function $h$ is strictly increasing on $(1/e, +\infty)$) that $e^{\tilde{z}}<e^{\hat{z}}<x$, therefore $F(\tilde{z})<0$ and $F(\hat{z})<0$. Then equation \eqref{root} has a unique solution $\bar z>\hat{z}$ such that $\xi \leq p\bar ze^{\bar z}$. As $p\bar ze^{\bar z}\geq \xi$ can be equivalently written in this case as $\bar z=W(\bar ze^{\bar z})\geq W(\xi/p)$, one concludes $\bar z\in[W(\xi/p),+\infty)$.

$(v)$ When $-p/e<\xi\leq0$ and $x> 1$, one determines again $\tilde{z}, \hat{z} \in \mathbb{R}$. As $e^{\tilde{z}}<e^{\hat{z}}<x$, $F(\tilde{z})<0$ and $F(\hat{z})<0$, thus the conclusion from the previous case can be drawn in this situation, too.

$(vi)$ When $\xi \geq 0$, the only situation not covered yet in this analysis is when $px\ln x>\xi\geq 0$, that yields $x>1$. Moreover, one can show that the equation $pze^z=\xi$ has only one solution, say $\hat{z}\in \mathbb{R}$ with $\hat{z}>0$. Then again $x>e^{\hat{z}}$, thus $F(\hat{z})<0$ and the uniqueness of the solution of \eqref{root} that verifies $p\bar ze^{\bar z}\geq \xi$, i.e. lies in the interval $[W(\xi/p),+\infty)$, follows analogously to the previous case.
\end{proof}

\begin{remark}\normalfont  As seen in the proof of Theorem \ref{ThmEpi} the function $h$ is not subdifferentiable at the origin. It was shown that the normal cone of $\epi h$ at $(0,0)$ is $N_{\epi h}(0,0)=(-\infty,0]\times\{0\}$ and this means that the only supporting hyperplane to the epigraph of $h$ at the origin is vertical.
\end{remark}

The next statement provides a concretization of Theorem \ref{ThmEpi} for the situation when $0<p\leq 1$.

\begin{corollary} \label{co4} Let $0<p\leq 1$, then it holds for all $(x,\xi)\in \mathbb{R}\times \mathbb{R}$ that
\begin{eqnarray*}
P_{\epi h}(x,\xi)=\begin{cases}
 (x,\xi), & \quad \text{if } px\ln x\leq \xi,\\
 (0,\xi), & \quad \text{if } x\leq0\text{ and }\xi\geq 0,\\
 (e^{\bar z},p\bar ze^{\bar z}),  & \quad \text{otherwise,}
\end{cases}
\end{eqnarray*}
where $\bar z\in \mathbb{R}$ is the unique solution of \eqref{root} that fulfills $p\bar ze^{\bar z}\geq \xi$. More precisely, exactly one of the following six cases holds\begin{enumerate}
  \item[(i)] $\xi\leq -\frac{p}{e}$ and $x<0$, then $\overline{z}\in \mathbb{R}$ is unique and $\overline{z}\in(-\infty,0]$;
  \item[(ii)] $\xi\leq -\frac{p}{e}$ and $x\geq 0$, then $\overline{z}\in \mathbb{R}$ is unique and $\overline{z}\in (-\infty,x]$;
    \item[(iii)] $-\frac{p}{e} <\xi \leq 0$ and $x<0$, then $\overline{z}\in \mathbb{R}$ is unique and $\bar z\in (-\infty,W_{-1}(\xi/p)]$;
  \item[(iv)] $-\frac{p}{e} <\xi \leq 0$ and $0\leq x\leq \frac{1}{e}$, then $\overline{z}\in \left[W_{-1}\left(\frac{\xi}{p}\right)+\ln(x)-1,W_{-1}\left(\frac{\xi}{p}\right)\right]$;
  \item[(v)] $-\frac{p}{e} <\xi \leq 0$ and $x> \frac{1}{e}$, then $\overline{z}\in\left[W\left(\frac{\xi}{p}\right),W\left(\frac{\xi}{p}\right)+\ln(x)+2\right]$;
  \item[(vi)] $\xi>0$, then $x>1$ and $\overline{z}\in \left[W\left(\frac{\xi}{p}\right),W\left(\frac{\xi}{p}\right)+\ln(x)+1\right]$.
\end{enumerate}
\end{corollary}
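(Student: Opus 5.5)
The projection formula and the characterization of $\bar z$ as a root of \eqref{root} with $p\bar ze^{\bar z}\geq \xi$ come directly from Theorem \ref{ThmEpi}, which holds for every $p>0$. What remains is to prove uniqueness in each of the six listed regions and to prove the stated localization intervals. The tool throughout is the function $F(z)=p(z+1)(pze^z-\xi)+e^z-x$ from the proof of Theorem \ref{ThmEpi}, together with its limits \eqref{eq21}, \eqref{eq22} and the decomposition $F'(z)=e^zp^2(z+1)^2+e^z+p(pze^z-\xi)$. In every case the plan is the same: show that $F$ is strictly increasing on the admissible region $\{z:\ pze^z\geq\xi\}$, or on the relevant component of it. Then exhibit an explicit left endpoint $a$ with $F(a)\leq 0$ and an explicit right endpoint $b$ with $F(b)\geq 0$, and apply the intermediate value theorem.

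\textbf{Cases (i) and (ii)}, where $\xi\leq -p/e$. By \eqref{eq19} we have $pze^z-\xi\geq 0$ for all $z$, so $F'>0$ and $F$ is strictly increasing on $\mathbb{R}$. Uniqueness of $\bar z$ follows, and the admissibility condition holds automatically. For the upper bound it suffices to check that $F(0)\geq 0$ when $x<0$ and that $F(x)\geq 0$ when $x\geq 0$.
\begin{itemize}
\item When $x<0$ we have $F(0)=-p\xi+1-x>0$.
\item When $x\geq 0$ we have $F(x)=p(x+1)(pxe^x-\xi)+e^x-x$. Here $pxe^x-\xi\geq 0$, and $e^x-x\geq 1>0$.
\end{itemize}

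\textbf{Case (iii)} is exactly case (ii) of the proof of Theorem \ref{ThmEpi}, so nothing new is needed.

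\textbf{Cases (iv)--(vi).} Uniqueness is already established in the proof of Theorem \ref{ThmEpi}: $F$ is strictly increasing on the relevant branch $(-\infty,\tilde z]$ or $[\hat z,+\infty)$, where $\tilde z=W_{-1}(\xi/p)$ and $\hat z=W(\xi/p)$. That argument also gives the endpoint inequalities $F(\tilde z)>0$ in case (iv), and $F(\hat z)<0$ in cases (v) and (vi). These supply one side of each bracket, so only the other side must be produced.

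\begin{itemize}
\item \emph{Case (iv).} Let $a=\tilde z+\ln x-1\leq\tilde z$; we need $F(a)\leq 0$. Since $a\leq\tilde z<-1$, the factor $(a+1)$ is negative. By monotonicity of $ze^z$ on $(-\infty,-1]$ we get $pae^a-\xi\geq 0$, so the first term of $F(a)$ is $\leq 0$. It remains to compare $e^a=xe^{\tilde z-1}\leq x/e^2< x$. Hence $F(a)<0$ for $x>0$. The degenerate case $x=0$ (interval endpoint $-\infty$) is read trivially.
\item \emph{Cases (v) and (vi).} Let $b=\hat z+\ln x+c$ with $c=2$ or $c=1$ respectively; we need $F(b)\geq 0$. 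We have $e^b=xe^{\hat z+c}$. In case (vi), $\hat z>0$ and $x>1$, so $e^b\geq ex>x$ and also $b\geq\hat z$. The first term is then nonnegative because $b+1>0$ and $pbe^b\geq p\hat ze^{\hat z}=\xi$ on the increasing branch. In case (v) we have $\hat z\geq -1$, hence $e^{\hat z+2}\geq e$ and again $e^b>x$. However, $\ln x$ may be negative when $1/e<x<1$. Nevertheless $\ln x+2>1>0$, so $b>\hat z\geq-1$ and the first term is still $\geq 0$.
\end{itemize}

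The statement $x>1$ in case (vi) follows because $px\ln x>\xi>0$ forces $x>1$.

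\textbf{Expected main obstacle.} The delicate point is checking the sign of the factor $(pbe^b-\xi)$ and of $(b+1)$ at the chosen endpoints. This works only because each endpoint is placed on the correct monotone branch of $ze^z$, and in case (v) one must be careful that $\ln x$ can be negative. I would verify these signs first. If a constant offset turns out too tight, I would enlarge it, since the localization constants are not essential to uniqueness.
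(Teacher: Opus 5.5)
Your treatment of cases (i), (ii), (iv), (v), (vi) is correct and matches the paper: the same test points $0$, $x$, $\tilde z+\ln x-1$, $\hat z+\ln x+2$, $\hat z+\ln x+1$, and the same sign analysis. Your derivation of $F(\tilde z)=e^{\tilde z}-x>0$ and $F(\hat z)=e^{\hat z}-x<0$ from the monotonicity of $h$ is cleaner than the paper's appeal to limits.

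The gap is case (iii). Your proposal never uses the hypothesis $0<p\le 1$, yet this hypothesis is the only thing the corollary adds to Theorem~\ref{ThmEpi}. In (i)--(iii) the statement says that $\bar z\in\mathbb{R}$ is unique, meaning $\bar z$ is the only real root of \eqref{root}. That is stronger than being the only root with $p\bar z e^{\bar z}\ge\xi$, which Theorem~\ref{ThmEpi} already gives. The paper's proof concludes exactly this (``the unique solution of \eqref{root} on the whole space''), and its subsequent remark says $p\le1$ is used only in (iii). In (i) and (ii) global uniqueness follows from $F'>0$ on $\mathbb{R}$. Case (ii) of the proof of Theorem~\ref{ThmEpi}, however, only controls $F$ on $(-\infty,\tilde z]$ and $[\hat z,+\infty)$. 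It says nothing about $(\tilde z,\hat z)$, where $F$ may decrease and could vanish. So ``nothing new is needed'' is false.

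What is missing is a proof that $F>0$ on $(\tilde z,\hat z)$ when $-p/e<\xi<0$ and $x<0$.
\begin{itemize}
\item For $z\in(\tilde z,-1]$, write $F(z)=p(z+1)(pze^z-\xi)+e^z-x$. Both factors of the first term are nonpositive, so their product is $\ge 0$, and $e^z-x>0$.
\item For $z\in(-1,\hat z)$, write $F(z)=\bigl(p^2z(z+1)+1\bigr)e^z-p\xi(z+1)-x$. Since $z(z+1)\ge -1/4$ and $p\le 1$, the first term is at least $\frac34 e^z>0$. Also $-p\xi(z+1)\ge 0$ and $-x>0$.
\end{itemize}
Combined with $F(\hat z)>0$ and the monotonicity of $F$ on $[\hat z,+\infty)$, this gives $F>0$ on $(\tilde z,+\infty)$. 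Hence $\bar z<\tilde z$ is the unique real root.

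Some bound on $p$ is genuinely needed here. Take $p=3$ and $\xi=x=-10^{-2}$. Then $F(-1)=e^{-1}+10^{-2}>0$, $F(-1/2)=-\frac54 e^{-1/2}+0.025<0$, and $F(\hat z)=e^{\hat z}+10^{-2}>0$. So $F$ has two further zeros inside $(\tilde z,\hat z)$. Any argument for (iii) that does not invoke a bound such as $p\le1$ therefore cannot be complete.
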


\begin{proof} Theorem \ref{ThmEpi} implies immediately the formulae for $P_{\epi h}$, the equation and the situation when $\xi\leq -\frac{p}{e}$ and $x\in \mathbb{R}$, then $\overline{z}\in \mathbb{R}$. When $x<0$, then $F(0)=1-p\xi-x>0$ and together with the fact that $F'(z)>0$ for all $z\in \mathbb{R}$ one gets case $(i)$.

If $x\geq 0$, then $F(x)=p(x+1)(pxe^x-\xi)+e^x-x>0$ and again by using the fact that $F'(z)>0$ for all $z\in \mathbb{R}$ follows case $(ii)$.

When $-p/e <\xi \leq 0$ and $x<0$, then it is mentioned in the proof of Theorem \ref{ThmEpi} that $F$ can decrease only on the subinterval of $(\tilde{z},\hat{z})$ where $\tilde{z}<-1<\hat{z}<0$. If one takes any $\breve{z}\in(\tilde{z},\hat{z})$ with $\breve{z}<-1$, then $p\breve{z}e^{\breve{z}}<\xi$ and
\begin{eqnarray*}
F(\breve{z})=\underbrace{p(\breve{z}+1)}_{<0}\underbrace{(p\breve{z}e^{\breve{z}}-\xi)}_{<0}+\underbrace{e^{\breve{z}}-x}_{>0}>0.
\end{eqnarray*}
Further, for any $\breve{z}\in(\tilde{z},\hat{z})$ with $\breve{z}>-1$ it also holds $p\breve{z}e^{\breve{z}}<\xi$ and
\begin{eqnarray*}
F(\breve{z})=\underbrace{(p^2\breve{z}^2+p^2\breve{z}+1)}_{>0, \text{ since } 0<p\leq 1}e^{\breve{z}}\underbrace{-p\xi(\breve{z}+1)-x}_{>0}>0.
\end{eqnarray*}
These two cases show that $F$ is strictly positive on the subinterval for all $z\in(\tilde{z},\hat{z})$ such that $\overline{z}$ is the unique solution of \eqref{root} on the whole space $\mathbb{R}$ and $\bar z\in (-\infty,W_{-1}(\xi/p)]$, i.e. case $(iii)$ is true.

Moreover, if $-p/e <\xi \leq 0$ and $0\leq x\leq 1/e$, then $\overline{z}\in(-\infty,W_{-1}(\xi/p)]$. From \eqref{eq21} and \eqref{eq22} one has that $\lim_{z\rightarrow +\infty}F(z)=+\infty$ as well as $\lim_{z\rightarrow -\infty}F(z)=-\infty$, hence, $F(W_{-1}(\xi/p))>0$. As $W_{-1}(\xi/p)<0$ and $\ln(x)<0$, it follows $W_{-1}(\xi/p)+\ln(x)-1<W_{-1}(\xi/p)<0$ such that
%\begin{eqnarray*}
$$\underbrace{p(W_{-1}(\xi/p)+\ln(x))}_{<0}\underbrace{(p(W_{-1}(\xi/p)+\ln(x)-1)e^{W_{-1}(\xi/p)+\ln(x)-1}-\xi)}_{\geq0, \text{ since }W_{-1}(\xi/p)+\ln(x)-1\in (-\infty,W_{-1}(\xi/p)]}
+\!\!\!\!\!\underbrace{x(e^{W_{-1}(\xi/p)-1}-1)}_{\leq 0, \text{ as } x\geq 0,\ W_{-1}(\xi/p)-1<0}\!\!\!\!\! <0,$$
%\end{eqnarray*}
i.e. $F(W_{-1}(\xi/p)+\ln(x)-1)<0$, which verifies $(iv)$.

Now, if $-p/e<\xi\leq 0$ and $x>1/e$, then $\overline{z}\in [W(p/e),+\infty)$. Using again \eqref{eq21} and \eqref{eq22} reveals that $F(W(\xi/p))<0$ and since $W(\xi/p)\geq-1$, it holds that $W(\xi/p)+\ln(x)+2>0$ (notice also that $x>1/e$, i.e. $\ln(x)>-1$) and
%\begin{eqnarray*}
$$\underbrace{p(W(\xi/p)+\ln(x)+3)}_{>0}\underbrace{(p(W(\xi/p)+\ln(x)+2)e^{W(\xi/p)+\ln(x)+2}-\xi)}_{>0, \text{ since }W(\xi/p)+\ln(x)+2\in [W(\xi/p),+\infty)}
+\underbrace{x(e^{W(\xi/p)+2}-1)}_{\geq 0}>0,$$
%\end{eqnarray*}
which means that $F(W(\xi/p)+\ln(x)+2)>0$ and thus, case $(v)$ is true.

Finally, let $\xi>0$, then $x>1$ and $\overline{z}\in [W(\xi/p),+\infty)$. As $px\ln(x)>\xi$, it holds $W(x\ln x)=\ln x>W(\xi/p)$, which implies that $x>e^{W(\xi/p)}$. Furthermore, since $\lim_{z\rightarrow+\infty}F(z)=+\infty$ and
%\begin{eqnarray*}
$$F(W(\xi/p))=\underbrace{p(W(\xi/p)+1)\left(p\frac{\xi}{p}-\xi\right)}_{=0}+e^{W(\xi/p)-x}<0,$$
%\end{eqnarray*}
we have that $W(\xi/p)+\ln(x)+1>W(\xi/p)$ and hence,
%\begin{eqnarray*}
$$\underbrace{p(W(\xi/p)+\ln(x)+2)}_{>0}\underbrace{(p(W(\xi/p)+\ln(x)+1)e^{W(\xi/p)+\ln(x)+1}-\xi)}_{>0, \text{ since }W(\xi/p)+\ln(x)+1\in [W(\xi/p),+\infty)}
+\underbrace{x(e^{W(\xi/p)+1}-1)}_{\geq 0, \text{ since } W\left(\xi/p\right)\geq -1}>0,$$
%\end{eqnarray*}
i.e. $F(W(\xi/p)+\ln(x)+1)>0$, which verifies $(vi)$.
\end{proof}

\begin{remark}\normalfont
Note that the hypothesis $0<p\leq 1$ was relevant only in the case $(iii)$ of Corollary \ref{co4}, while the other five situations described there hold also in the general case where $p>0$.
\end{remark}

\section{Numerical experiments}\label{se5}

In order to demonstrate how portfolio optimization problems where the risk is modelled by the entropic value-at-risk can be numerically solved by parallel splitting algorithms, we make use of the results derived in Section \ref{se3}. More precisely, we determine optimal solutions to the dual problem $(D'_L)$ obtained there by employing the Douglas-Rachford parallel splitting algorithm provided in \cite{BCO} for solving $(P')$ (recast as an unconstrained optimization problem as
\begin{eqnarray*}
(PU')&& \inf_{(t,\tau,x,y)\in \mathbb{R}\times\mathbb{R}^m\times\mathbb{R}^m\times\mathbb{R}^d}
\left\{t+\sigma_{\widetilde{D}}(y) +\delta_{\mathcal{C}_{2}}(x)+\delta_{\mathcal{C}_{3}}(x,y,t)+ \sum\limits_{j=1}^m \delta_{\epi h_j}(x_j,\tau_j) +\delta_{\mathcal{B}}(\tau) \right\}\\
\end{eqnarray*}
where
$\mathcal{C}_{3}=\left\{(x,y, t)\in\mathbb{R}^m\times\mathbb{R}^d\times\mathbb{R}: \sum\limits_{j=1}^mp_ja_{ij}x_j-y_i\leq t,\ i=1,\hdots,d\right\}$,
and $\mathcal{B}$, $\mathcal{C}_{2}$ and $h_j$, $j=1, \hdots, m$, are defined as in Section \ref{se4}), and then exploiting the optimality conditions provided in Theorem \ref{theoremChapter1:3.2} in order to determine an optimal solution to its Lagrange dual problem, that is a portfolio optimization problem where the risk is quantified by entropic value-at-risk. The performance of this procedure is compared with the one of the primal dual interior point algorithm EVaR-PD used in \cite[Section 4.2]{AJF} for solving the same portfolio optimization problem. In the tables and figures below, the values regarding the parallel splitting algorithm correspond to the performance of the mentioned algorithm from \cite{BCO} when solving $(PU')$, to which the effort of solving the optimality conditions system in order to recover an optimal solution to the considered portfolio optimization problem is added. On the other hand, the graphs corresponding to the primal-dual interior point method are ``mirrored'' in order to reflect a maximization problem, as is the dual to $(PU')$.
%We assess the efficiency of our proposed parallel splitting algorithm for solving the considered entropy portfolio optimization problem.

We choose as stopping criteria for our iterative method the values $\epsilon_1=10^{-3}$, $\epsilon_2=10^{-4}$, $\epsilon_3=10^{-6}$ and $\epsilon_4=10^{-8}$, where for $i=1,2,3,4$, $\epsilon_i$ stands for the maximal bounds between the objective function values given by two consecutive iterations. The origin of the corresponding spaces were taken as starting points. Our computational experiments were performed in \textsc{MATLAB} on a desktop computer equipped with an 11th Gen Intel Core i7-1185G7 processor (4 cores, 8 threads, base frequency 3.00 GHz, up to 4.80 GHz turbo), 16 GB of RAM, and a 64-bit operating system. The numerical examples used in this study were not based on real financial data, the involved data being randomly generated for simulation purposes, as our scope is to show the applicability of our method, as an alternative to the one proposed in \cite{AJF}. Nonetheless, the proposed approach could be readily applied to real data in future analyses.

\begin{example}\label{exn10m10}
	We first consider an example where $d=m=10$, $D=\mathbb{R}^n$, and the stepsizes $\nu$ and parameters $\mu$ take various values. The best performances of our tests are illustrated in Tables \ref{tab:Entropyn10m10e1}-\ref{tab:Entropyn10m10e4}.

	\begin{table}[h!]
		\centering
		\begin{tabular}{lcc}
			\hline %\toprule
			Algorithm&\textbf{PS} ($\mu=1.65$, $\nu=1.4$)&\textbf{PD} \\
			\hline %\midrule
			Number of iterations & $646$ & $351$ \\
			CPU time (seconds) & $1.0744$ & $3.580059$ \\
			Objective value & $2.299084$ &$2.296731$\\
			\hline %\bottomrule
		\end{tabular}
		\caption{Comparison of splitting (PS) and primal-dual interior point (PD) methods when $d=10$ and $m=10$ for $\epsilon_1=10^{-3}$}
		\label{tab:Entropyn10m10e1}
	\end{table}
	
	\begin{table}[h!]
		\centering
		\begin{tabular}{lcc}
			\hline %\toprule
			Algorithm&\textbf{PS} ($\mu=1.65$, $\nu=1.4$)&\textbf{PD} \\
			\hline %\midrule
			Number of iterations & $924$ & $352$ \\
			CPU time (seconds) & $1.3984$ & $3.785709$ \\
			Objective value & $2.297589$ &$2.296807$\\
			\hline %\bottomrule
		\end{tabular}
		\caption{Comparison of splitting (PS) and primal-dual interior point (PD) methods when $d=10$ and $m=10$ for $\epsilon_2=10^{-4}$}
		\label{tab:Entropyn10m10e2}
	\end{table}
	
	\begin{table}[h!]
		\centering
		\begin{tabular}{lcc}
			\hline %\toprule
			Algorithm&\textbf{PS} ($\mu=1.65$, $\nu=1.4$)&\textbf{PD} \\
			\hline %\midrule
			Number of iterations &$1369$ & $354$ \\
			CPU time (seconds) & $2.2137$ & $4.394282$ \\
			Objective value& $2.296820$ & $2.296826$ \\
			\hline %\bottomrule
		\end{tabular}	
\caption{Comparison of splitting (PS) and primal-dual interior point (PD) methods when $d=10$ and $m=10$ for $\epsilon_3=10^{-6}$} \label{tab:Entropyn10m10e3}
	\end{table}
	
	\begin{table}[h!]
		\centering
		\begin{tabular}{lcc}
			\hline %\toprule
			Algorithm&\textbf{PS} ($\mu=1.65$, $\nu=1.4$)&\textbf{PD} \\
			\hline %\midrule
			Number of iterations &$1939$ & $357$ \\
			CPU time (seconds) & $2.9496$ & $4.421544$ \\
			Objective value& $2.296827$ & $2.296827$ \\
			\hline %\bottomrule
		\end{tabular}	
\caption{Comparison of splitting (PS) and primal-dual interior point (PD) methods when $d=10$ and $m=10$ for  $\epsilon_4=10^{-8}$} \label{tab:Entropyn10m10e4}
	\end{table}
	
	The parallel splitting proximal point method delivered the optimal solution to $(PU')$ consisting of
$$\left\{	
\begin{array}{l}
		\bar{x}=\left(0.9463, 0.9369, 1.3343, 3.3603, 0.0500, 0.8699, 0.0675, 0.5003, 0.1664, 1.7321\right)^\top\\
\bar{y}=0_{\mathbb{R}^{10}}\\		
\bar{\tau}= \left(-0.0011, -0.0099, 0.0415, 0.4953, -0.0286, -0.0029, -0.0155, -0.0389, -0.0227, 0.0937\right)^\top\\
\bar{t}= 4.7414.
	\end{array}
\right.$$

	%We can deduce the value for $y$ considering the choice of the set $D$. Indeed as $D=\mathbb{R}^n$, since the term $\sigma_{D}(y)$ appears in the objective function, we have $.\\

Next, we determine an optimal asset allocation via the portfolio optimization problem $(D'_L)$ by solving the system of equations consisting of the optimality conditions provided in Theorem \ref{theoremChapter1:3.2}. The optimal solution to $(D'_L)$ determined in this way is
$$\left\{	
\begin{array}{l}
\bar{\beta}=3.6973\\
\bar{\lambda} = \left(0.0000, 0.0516, 0.0000, 0.5575, 0.0000, 0.0000, 0.0000, 0.0000, 0.3909, 0.0000\right)^\top,
	\end{array}
\right.$$
while $v(PU')=v(D'_L)=2.296827$.	

Although the primal-dual interior point method from \cite{AJF} requires fewer iterations (see Tables  \ref{tab:Entropyn10m10e1}--\ref{tab:Entropyn10m10e4}), the parallel splitting method demonstrates faster computational times for all considered error tolerances. For instance, the parallel splitting algorithm converged three times faster than the primal-dual interior point method from \cite{AJF} for $\epsilon_1=10^{-3}$ ($1.0744$ vs $3.580059$ seconds) and almost twice as fast when $\epsilon_3=10^{-6}$ ($2.2137$ vs $4.394282$ seconds), while both methods achieved comparable objective values.

The convergence of objective function values and optimal solutions to the portfolio optimization problem $(D'_L)$ is illustrated in Figure \ref{fig:n10m10cvobjvalEVaR-PDPS} and Figure \ref{fig:n10m10cvoptsolEVaR-PDPS}, respectively. It is important to note that the objective value of the primal optimization problem solved using the primal-dual (PD) interior point method, is equal to the negative of the objective value of the dual problem solved using the parallel splitting (PS) algorithm. To facilitate a direct comparison of the performances the two methods, the plotted values for the PD algorithm's results correspond to the negated primal objective values which are numerically equal to the dual objective values obtained by the PS algorithm. The apparent increase in the left-hand plots (PD) versus decrease in the right-hand plots (PS) thus reflects the natural monotonicity of the respective underlying optimization problems under their original sign conventions, rather than any discrepancy between the algorithms. The tables below report these adjusted values for clarity.
	
	\begin{figure}[t!]
		\centering
		\begin{subfigure}[t]{.45\textwidth}
			\centering
			\includegraphics[width=1\linewidth]{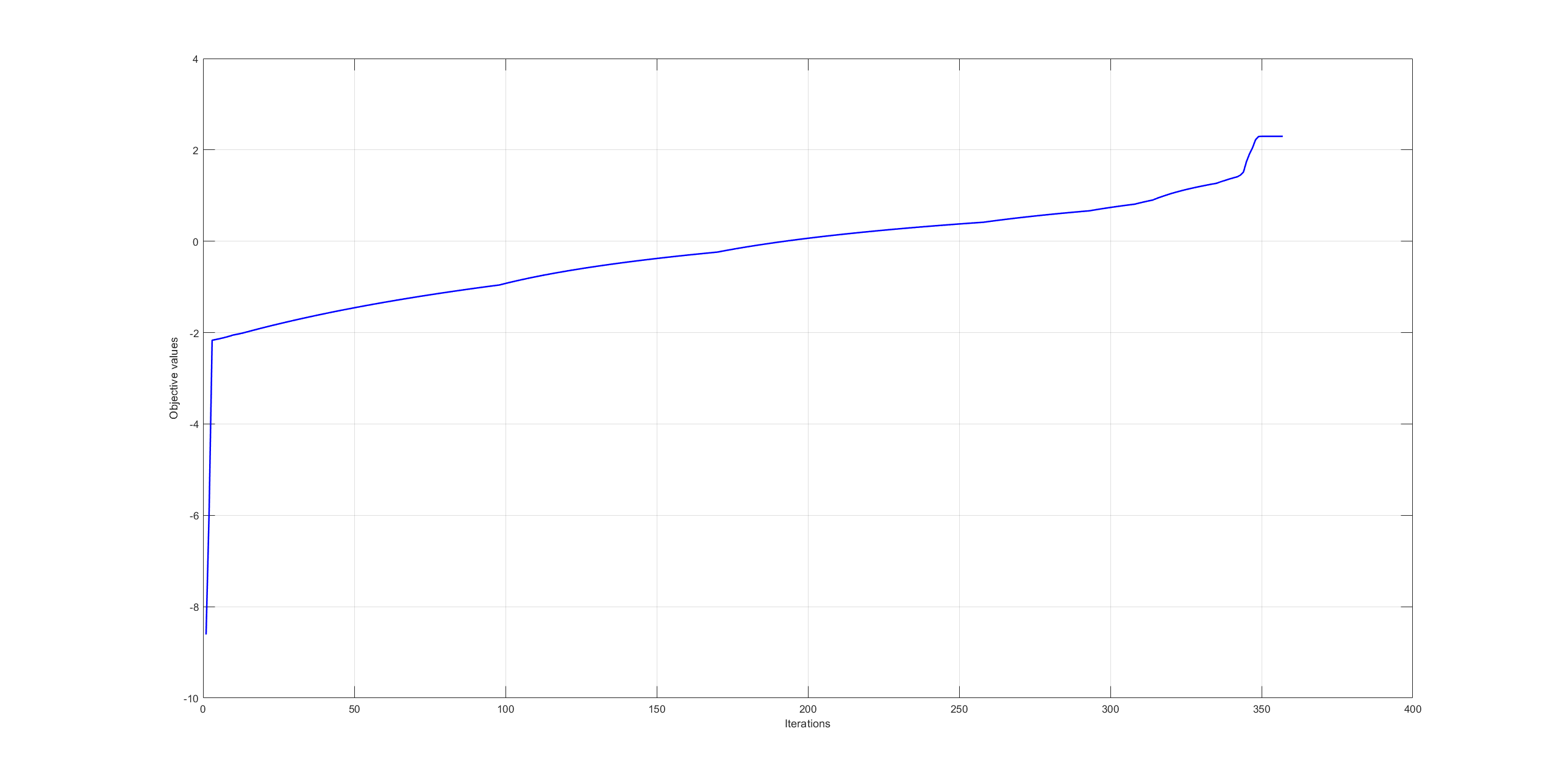}
			\caption{Objective value via primal-dual interior point}
			\label{fig:n10m10cvobjvalEVaR-PD}
		\end{subfigure}
		\begin{subfigure}[t]{.45\textwidth}
			\centering
			\includegraphics[width=1\linewidth]{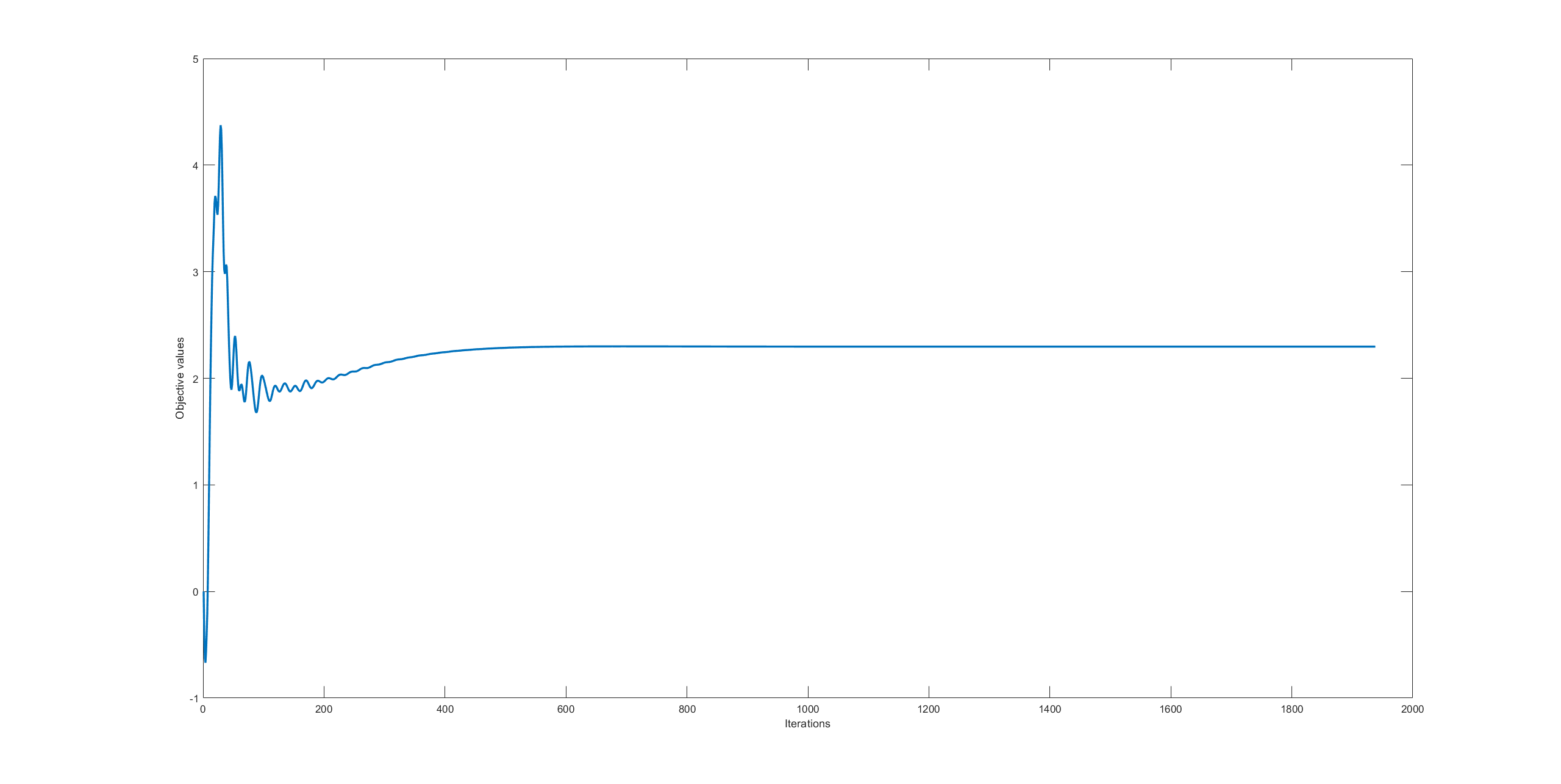}
			\caption{Objective value via parallel splitting}
			\label{fig:n10m10cvobjvalPS}
		\end{subfigure}
		\caption{Convergence of the objective value of problem $(D'_L)$ ($d=10$, $m=10$) via primal-dual interior point and parallel splitting methods}
		\label{fig:n10m10cvobjvalEVaR-PDPS}
	\end{figure}
	
	\begin{figure}
		\centering
		\begin{subfigure}{.45\textwidth}
			\centering
			\includegraphics[width=1\linewidth]{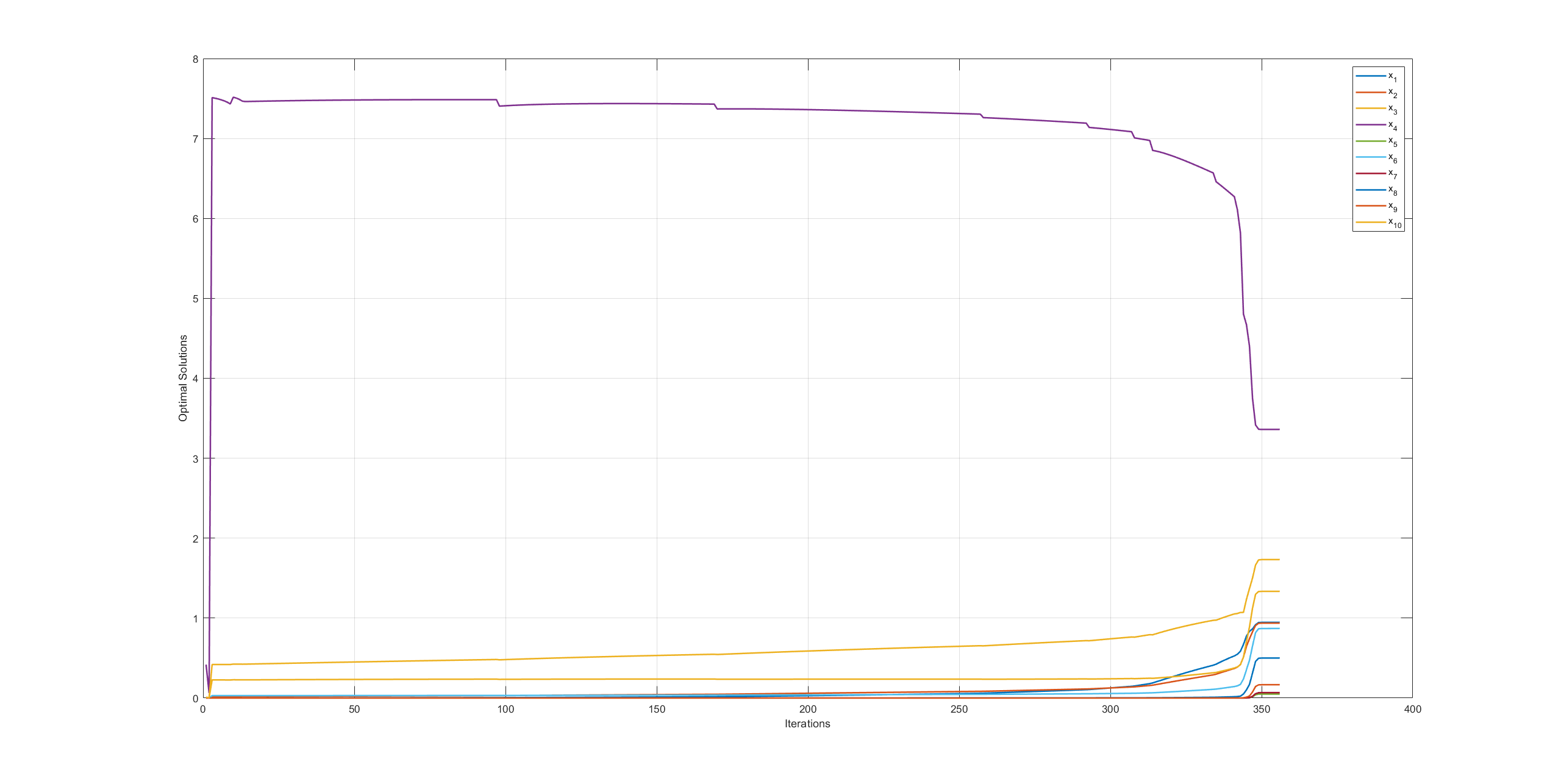}
			\caption{Optimal solution via primal-dual interior point}
			\label{fig:n10m10cvoptsolEVaR-PD}
		\end{subfigure}
		\begin{subfigure}{.45\textwidth}
			\centering
			\includegraphics[width=1\linewidth]{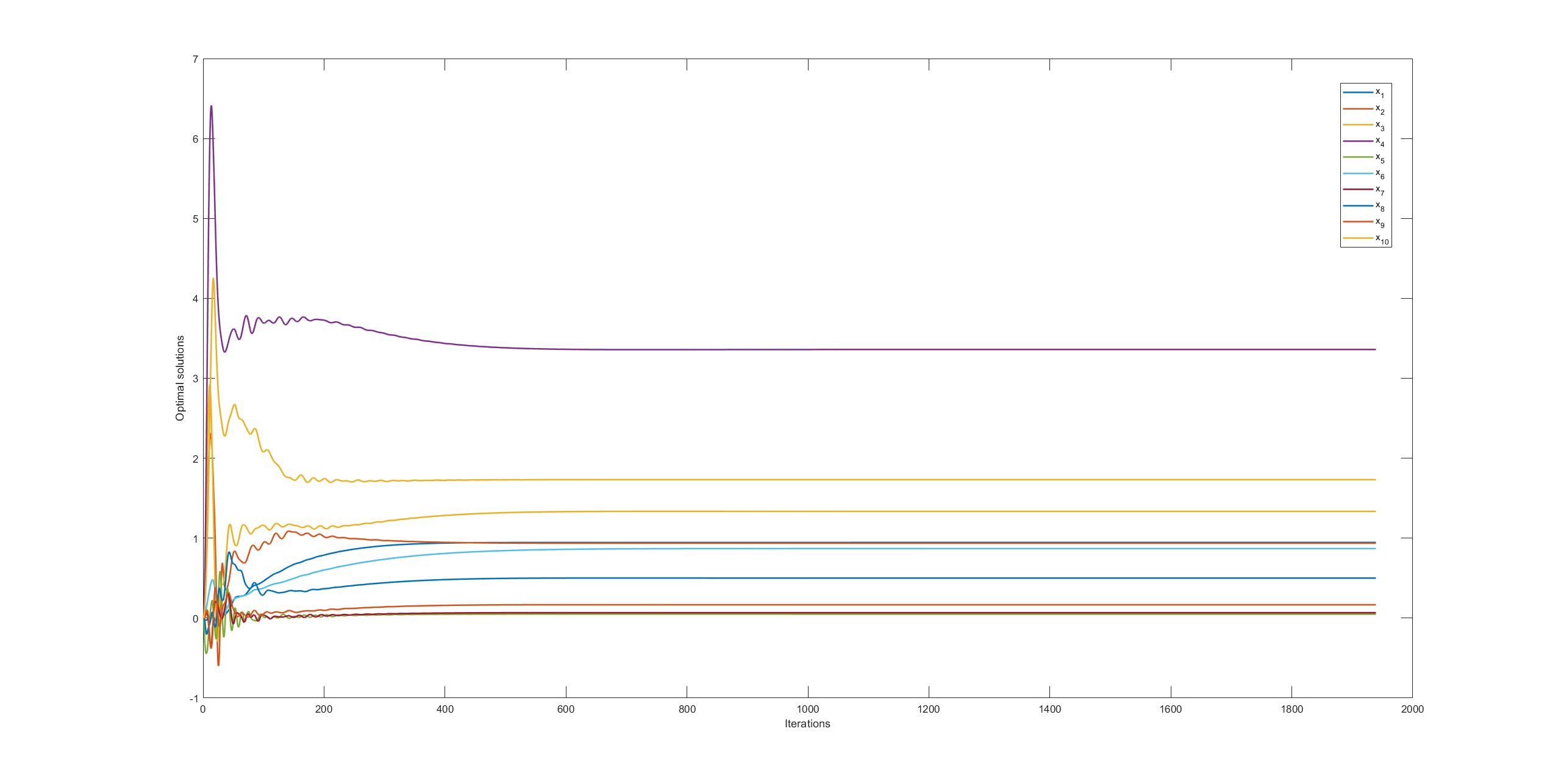}
			\caption{Optimal solution via parallel splitting}
			\label{fig:n10m10cvoptsolPS}
		\end{subfigure}
		\caption{Convergence of the optimal solution of problem $(D'_L)$ ($d=10$, $m=10$) via primal-dual interior point and parallel splitting methods, where each line represents one of the 10 components of the variables}
		\label{fig:n10m10cvoptsolEVaR-PDPS}
	\end{figure}
	\end{example}

\begin{example}
	We treat now the case where $d=m=15$. The best performances of our numerical simulations are illustrated in Tables \ref{tab:Entropyn15m15e1}--\ref{tab:Entropyn15m15e2}.
	
	\begin{table}[h!]
		\centering
		\begin{tabular}{lcc}
			\hline %\toprule
			Algorithm& \textbf{PS} ($\mu=1.83$, $\nu=2$)&\textbf{PD}\\
			\hline %\midrule
			Number of iterations & $1595$ & $634$ \\
			CPU time (seconds) & $3.4189$ & $5.702284$\\
			Objective value & $-2.8700$ & $-2.860610$ \\
			\hline %\bottomrule
		\end{tabular}
		\caption{Comparison of splitting (PS) and primal-dual interior point (PD) methods  when $d=15$ and $m=15$ for $\epsilon_1=10^{-3}$}
		\label{tab:Entropyn15m15e1}
	\end{table}
	
	\begin{table}[h!]
		\centering
		\begin{tabular}{lcc}
			\hline %\toprule
			Algorithm& \textbf{PS} ($\mu=1.83$, $\nu=2$)&\textbf{PD}\\
			\hline %\midrule
			Number of iterations & $2743$ & $634$ \\
			CPU time (seconds) & $5.7414$ & $5.5653$\\
			Objective value & $-2.861053$ & $-2.860610$ \\
			\hline %\bottomrule
		\end{tabular}
		\caption{Comparison of splitting (PS) and primal-dual interior point (PD) methods  when $d=15$ and $m=15$  for $\epsilon_2=10^{-4}$}
		\label{tab:Entropyn15m15e2}
	\end{table}
	
	%\begin{table}[h!]
	%	\centering
	%	\begin{tabular}{lcc}
	%		\hline %\toprule
	%		Algorithm&\textbf{PS} ($\mu=1.83$, $\nu=2$) &\textbf{PD} \\
	%		\hline %\midrule
	%		Number of iterations & $5432$ & $636$ \\
	%		CPU time (seconds) & $11.4630$ & $5.5811$ \\
	%		Objective value & $-2.860556$  & $-2.860555$ \\
	%		\hline %\bottomrule
	%	\end{tabular}
	%	\caption{Comparison of splitting (PS) and primal-dual interior point (PD) methods  when $d=15$ and $m=15$  for $\epsilon_3=10^{-6}$}
	%	\label{tab:Entropyn15m15e3}
	%\end{table}
	
	%\begin{table}[h!]
	%	\centering
	%	\begin{tabular}{lcc}
	%		\hline %\toprule
	%		Algorithm&\textbf{PS} ($\mu=1.83$, $\nu=2$) &\textbf{PD} \\
	%		\hline %\midrule
	%		Number of iterations & $8142$  & $639$ \\
	%		CPU time (seconds) & $17.9040$ & $5.5849$ \\
	%		Objective value & $-2.860552$  & $-2.860552$ \\
	%		\hline %\bottomrule
	%	\end{tabular}
	%	\caption{Comparison of splitting (PS) and primal-dual interior point (PD) methods  when $d=15$ and $m=15$  for $\epsilon_4=10^{-8}$}
	%	\label{tab:Entropyn15m15e4}
	%\end{table}
	
The parallel splitting method delivered for $(PU')$ the following optimal solution
$$	
\left\{
\begin{array}{l}
		\bar{x}=(0.5714, 2.6062, 1.3658, 3.9575, 0.0427, 2.4670, 0.1018, 0.1528,
		 2.3530, 0.1957, 0.8402,\\    0.6214, 0.1763, 0.6453, 0.3104)^\top\\
\bar{y}=0_{\mathbb{R}^{15}}\\		
\bar{\tau}= (-0.0125, 0.2369, 0.0472, 0.2945, -0.0054, 0.0136, -0.0238, -0.022, 0.0987, -0.0345,\\ -0.0118, -0.0217, -0.0176,-0.0277, -0.0024)^\top\\
		\bar{t}= 2.6233.
	\end{array}
\right.$$
	
Solving the optimality conditions system of equations provided in Theorem \ref{theoremChapter1:3.2}, we obtain for $(D'_L)$ the optimal solution
$$	
\left\{
\begin{array}{l}
\beta=5.4949\\
\bar{\lambda} = (0.0000, 0.0000, 0.7019, 0.0041, 0.2230, 0.0000, 0.0000, 0.0000, 0.0000, 0.0000, 0.0000,\\ 0.0000, 0.0000, 0.0710, 0.0000)^\top
	\end{array}
\right.$$

In this case, $v(PU')=v(D'_L)=-2.860552$.
	
	The primal-dual interior point method from \cite{AJF} needs again fewer iterations to deliver than the parallel splitting method, however, the latter converges faster when the accuracy is $\epsilon_2=10^{-3}$. However, for the tolerable error  $\epsilon_2=10^{-4}$, both algorithms converge in roughly the same time.

The convergence of objective function values and optimal solutions to the portfolio optimization problem $(D'_L)$ is illustrated in Figure
 \ref{fig:n15m15cvobjvalEVaR-PDPS} and Figure \ref{fig:n15m15cvoptsolEVaR-PDPS}, respectively.
	
	\begin{figure}
		\centering
		\begin{subfigure}{.45\textwidth}
	\centering
			\includegraphics[width=1\linewidth]{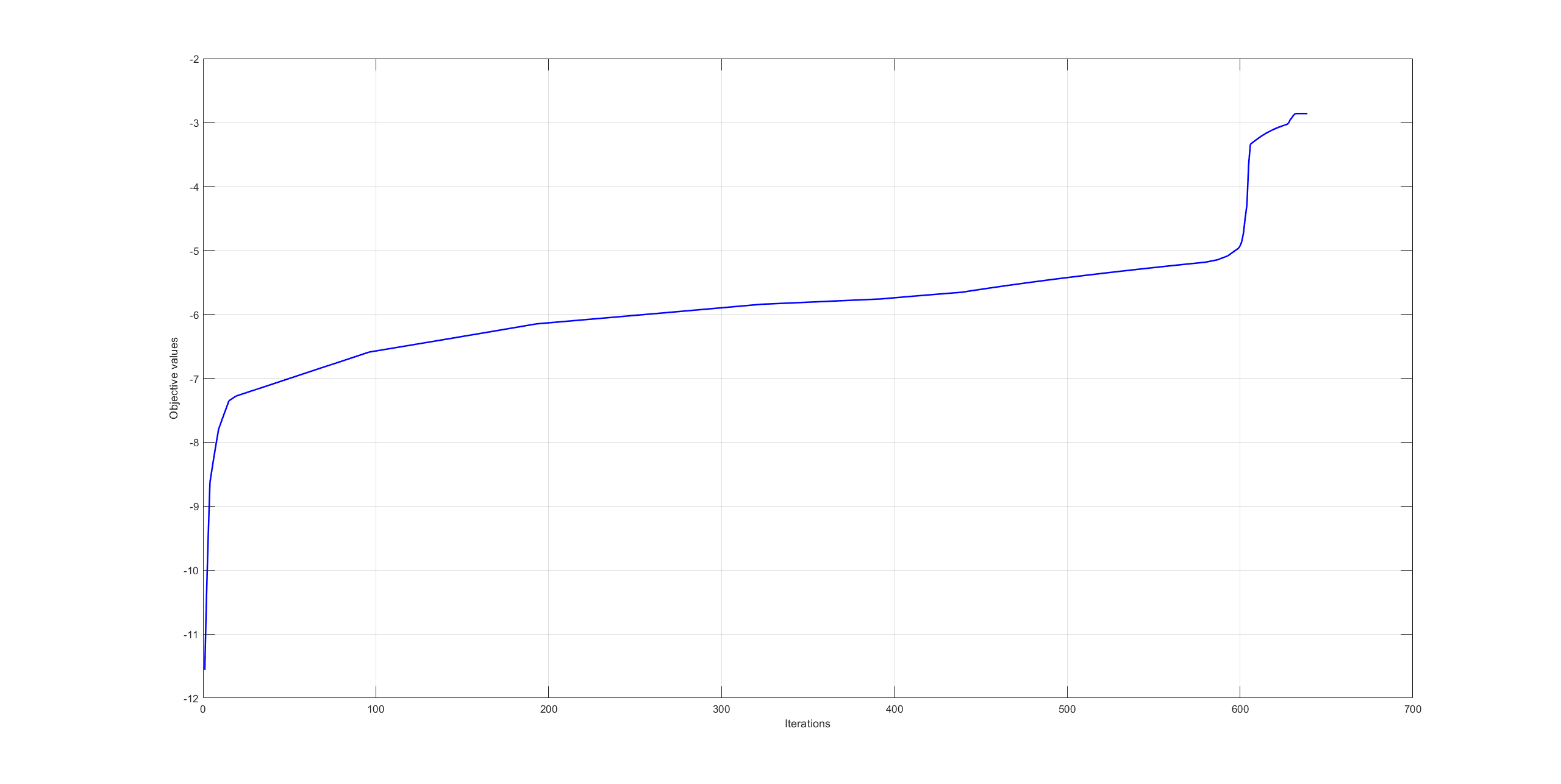}
			\caption{Objective value via primal-dual interior point}
			\label{fig:n15m15cvobjvalEVaR-PD}
		\end{subfigure}
		\begin{subfigure}{.45\textwidth}
			\centering
			\includegraphics[width=1\linewidth]{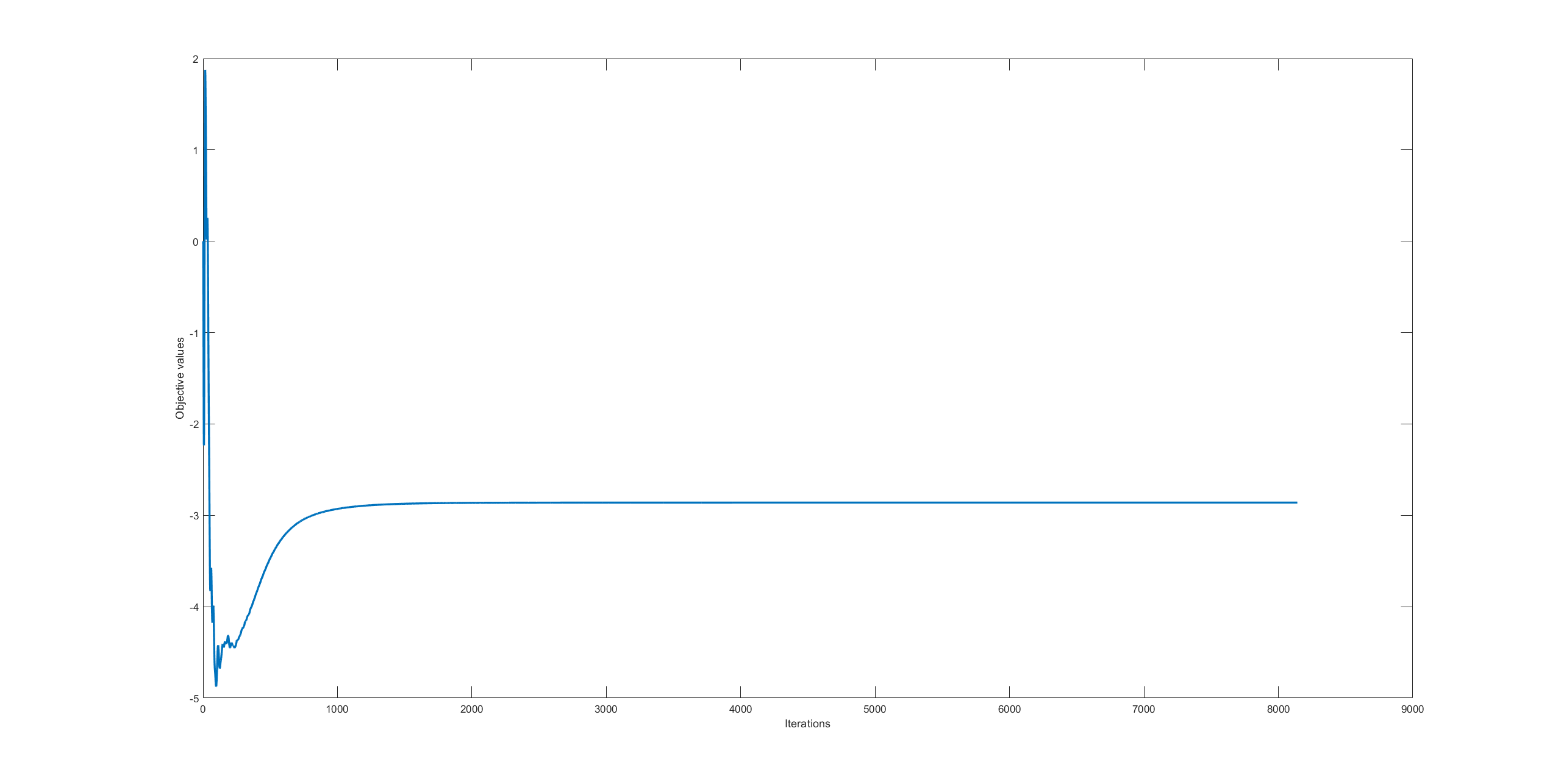}
			\caption{Objective value via parallel splitting}
			\label{fig:n15m15cvobjvalPS}
		\end{subfigure}
		\caption{Convergence of the objective value of problem $(D'_L)$ ($d=15$, $m=15$) via primal-dual interior point and parallel splitting methods}
		\label{fig:n15m15cvobjvalEVaR-PDPS}
	\end{figure}
	
	\begin{figure}
		\centering
		\begin{subfigure}{.45\textwidth}
			\centering
			\includegraphics[width=1\linewidth]{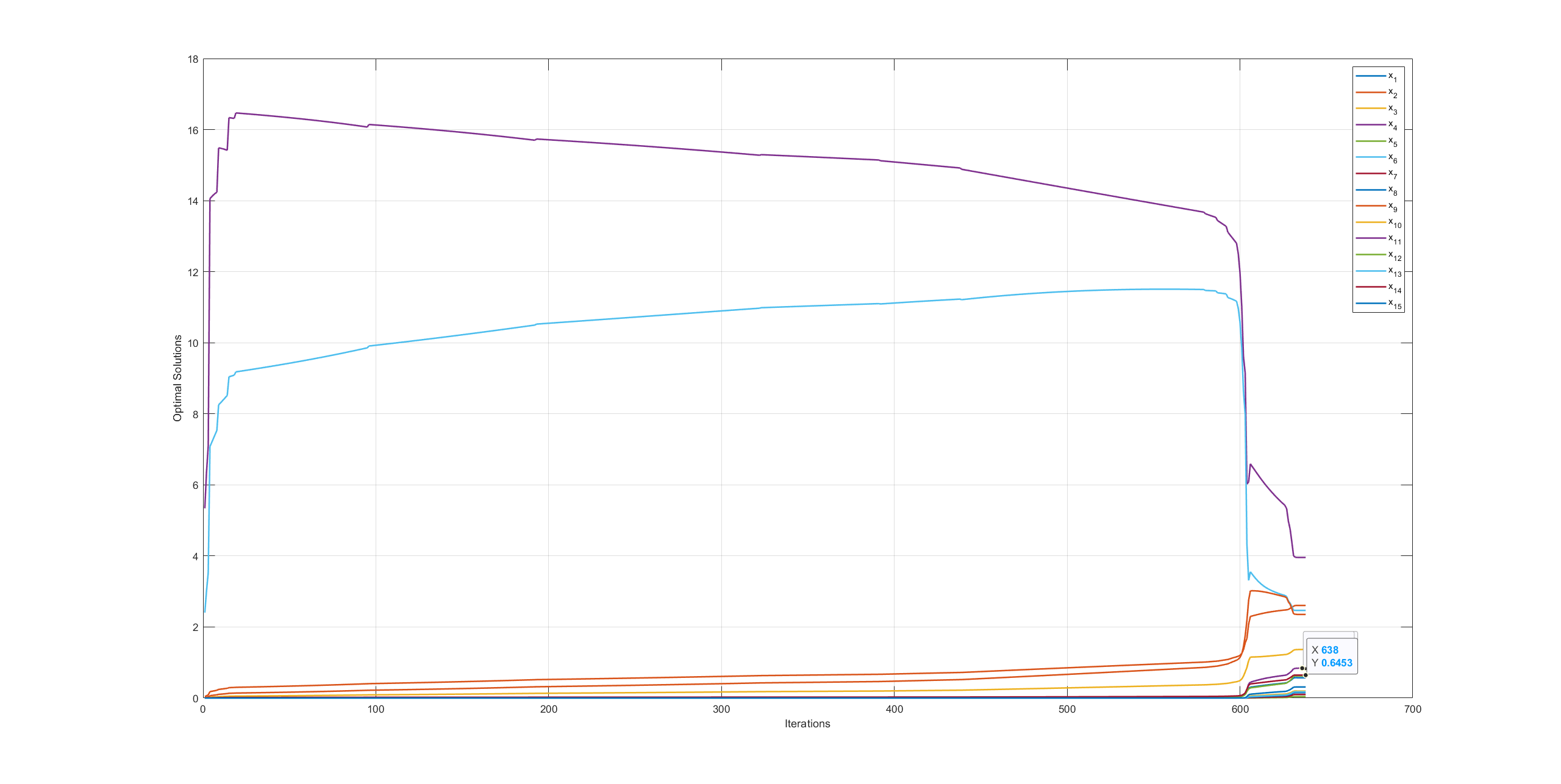}
			\caption{Optimal solution via primal-dual interior point}
			\label{fig:n15m15cvoptsolEVaR-PD}
		\end{subfigure}
		\begin{subfigure}{.45\textwidth}
			\centering
			\includegraphics[width=1\linewidth]{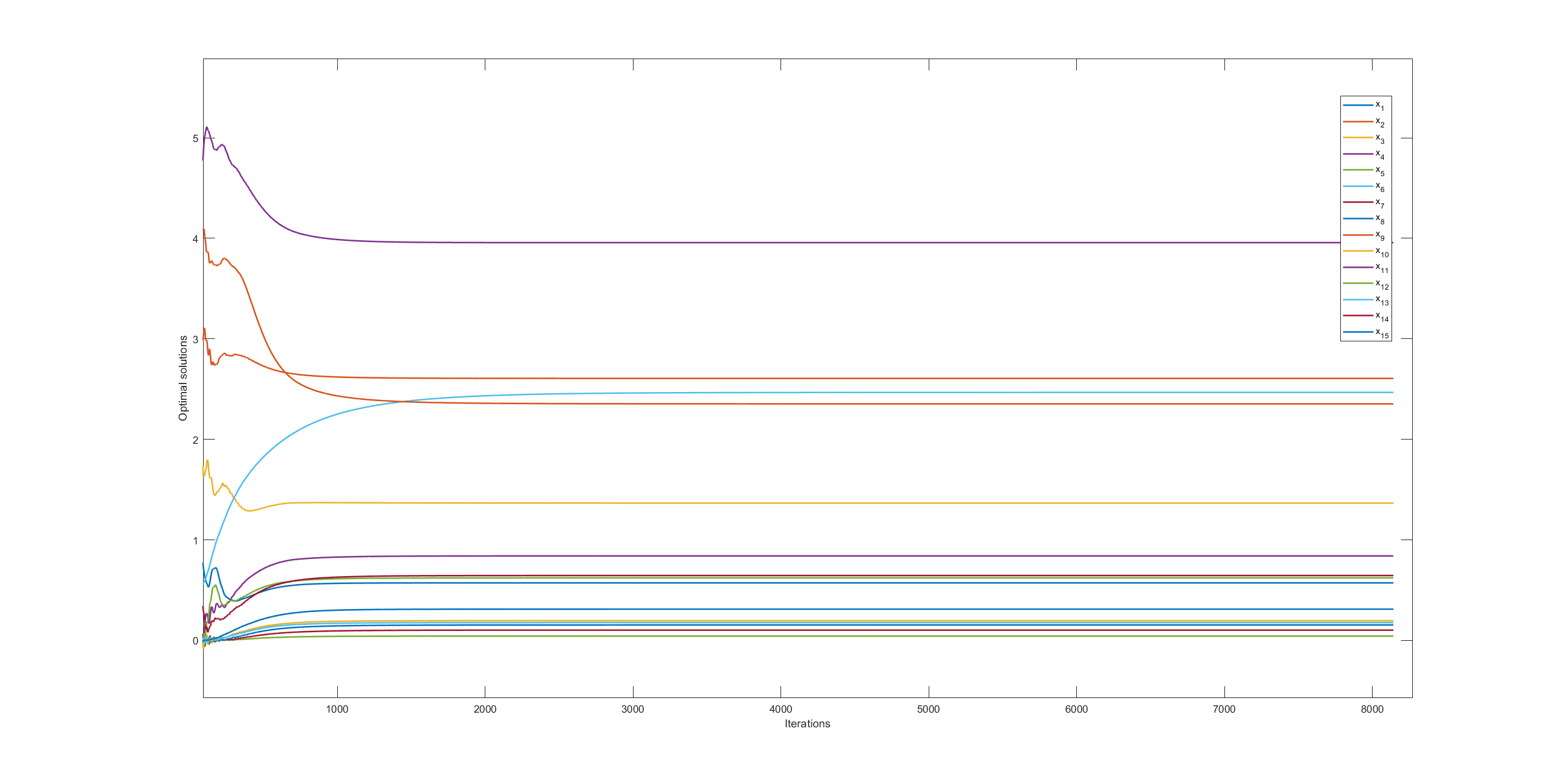}
			\caption{Optimal solution via parallel splitting}
			\label{fig:n15m15cvoptsolPS}
		\end{subfigure}
		\caption{Convergence of the optimal solution of problem $(D'_L)$ ($d=15$, $m=15$) via primal-dual interior point and parallel splitting methods, where each line represents one of the 15 components of the variables}
		\label{fig:n15m15cvoptsolEVaR-PDPS}
	\end{figure}
	
\end{example}

In the following two examples, we explore two different scenarios where the numbers of samples ($m$) and financial instruments ($n$) are not equal. The data for these tests, including the weights $p$, vectors $c$ and $h$ and the matrix $A$ were randomly generated using \textsc{MATLAB}. %This approach is the same as in other examples in this chapter on portfolio optimization, ensuring that the comparisons are consistent.

\begin{example}
	Let $d=10$ and $m=5$. The parallel splitting method delivered for $(D'_L)$ the optimal solution
	%\begin{align*}
	%	&\bar{x}=\left(2.8000, 0.1145, 0.7090, 0.6939, 0.3219\right)^\top,\\
	%	&\bar{\tau}= \left(0.7222,-0.0606,-0.0598,-0.0089,-0.0820\right)^\top,\\
	%	&\bar{t}=10.7293.
	%\end{align*}
	%As $D=\mathbb{R}^{10}$, we have $\bar{y}=0_{\mathbb{R}^{10}}$.\\
	%By solving linear system (\ref{S3}), the optimal asset allocation $\bar{\lambda}$ is given by (as well as

$$\left\{
\begin{array}{l}
\bar{\beta}=2.0589\\
\bar{\lambda} = \left(0.0000, 0.8763, 0.0000, 0.0000, 0.0000, 0.0000, 0.0000, 0.0000, 0.000, 0.1237\right),^\top
\end{array}\right.$$
while the optimal objective value is $v(PU')=v(D'_L)=9.667241$.
	
	\begin{table}[h!]
		\centering
		\begin{tabular}{lcc}
			\hline %\toprule
			Algorithm&\textbf{PS} ($\mu=1.45$, $\nu=2.2$)&\textbf{PD} \\
			\hline %\midrule
			Number of iterations & $353$ & $470$ \\
			CPU time (seconds) & $0.3906$ & $4.3591$ \\
			Objective value & $9.667381$ &$9.666822$\\
			\hline %\bottomrule
		\end{tabular}
		\caption{Comparison of splitting (PS) and primal-dual interior point (PD) methods when $d=10$ and $m=5$ for $\epsilon_1=10^{-3}$}
		\label{tab:Entropyn10m5e1}
	\end{table}
	
	\begin{table}[h!]
		\centering
		\begin{tabular}{lcc}
			\hline %\toprule
			Algorithm&\textbf{PS} ($\mu=1.45$, $\nu=2.2$)&\textbf{PD} \\
			\hline %\midrule
			Number of iterations & $460$ & $471$ \\
			CPU time (seconds) & $0.5383$ & $4.1449$ \\
			Objective value & $9.667280$ &$9.667154$\\
			\hline %\bottomrule
		\end{tabular}
		\caption{Comparison of splitting (PS) and primal-dual interior point (PD) methods when $d=10$ and $m=5$ for $\epsilon_2=10^{-4}$}
		\label{tab:Entropyn10m5e2}
	\end{table}
	
	\begin{table}[h!]
		\centering
		\begin{tabular}{lcc}
			\hline %\toprule
			Algorithm&\textbf{PS} ($\mu=1.45$, $\nu=2.2$)&\textbf{PD} \\
			\hline %\midrule
			Number of iterations &$673$ & $474$ \\
			CPU time (seconds) & $0.7169$ & $4.1813$ \\
			Objective value& $9.667242$ & $9.667240$ \\
			\hline %\bottomrule
		\end{tabular}	
		\caption{Comparison of splitting (PS) and primal-dual interior point (PD) methods when $d=10$ and $m=5$ for  $\epsilon_3=10^{-6}$}\label{tab:Entropyn10m5e3}
	\end{table}
	
	\begin{table}[h!]
		\centering
		\begin{tabular}{lcc}
			\hline %\toprule
			Algorithm&\textbf{PS} ($\mu=1.45$, $\nu=2.2$)&\textbf{PD} \\
			\hline %\midrule
			Number of iterations &$886$ & $476$ \\
			CPU time (seconds) & $0.8139$ & $4.4157$ \\
			Objective value& $9.667241$ & $9.667241$ \\
			\hline %\bottomrule
		\end{tabular}	
		\caption{Comparison of splitting (PS) and primal-dual interior point (PD) methods when $d=10$ and $m=5$ for $\epsilon_4=10^{-8}$}\label{tab:Entropyn10m5e4}
	\end{table}
	
The results displayed in Tables \ref{tab:Entropyn10m5e1}--\ref{tab:Entropyn10m5e4} highlight that although the primal-dual interior point method typically requires fewer iterations than the parallel splitting method, this is not the case in this example. Notably, for error tolerances $\epsilon_1=10^{-3}$ and $\epsilon_2=10^{-4}$, our parallel splitting needs only $353$ iterations to deliver an optimal solution to the considered portfolio optimization problem, compared to the $470$ needed by the EVaR-PD method for $\epsilon_1$ (also $460$ versus $471$ for $\epsilon_2$). Furthermore, the parallel splitting method demonstrates faster computational times for all error tolerances. Specifically, our method that involves a parallel splitting algorithm converged 10 times faster than the primal-dual interior point method from \cite{AJF} for $\epsilon_1=10^{-3}$ and almost 9 as fast for $\epsilon_4=10^{-8}$, while both methods achieved comparable objective values.
\end{example}

\begin{remark}\normalfont
The graphs in Figure \ref{fig:n10m10cvobjvalEVaR-PDPS}$(b)$ and Figure \ref{fig:n15m15cvobjvalEVaR-PDPS}$(b)$ look somehow counterintuitive, as the objective values of the portfolio optimization problems (in this case maximization problems) abruptly decrease from very large values and then slightly increase toward the (approximate) optimal values of the problems, different from the expected monotonicity (induced by proximal point methods). These graphs are obtained by solving at each iteration of the parallel splitting method for solving $(PU')$ the optimality conditions system from Theorem \ref{theoremChapter1:3.2} and then plotting the corresponding dual objective values. While the inherent computational approximations of the employed computer certainly play a role in this phenomenon, its main cause seems to be the lack of dual feasibility of the derived dual solutions in the first iterations. One can see, for instance, in Figure \ref{fig:n10m10cvoptsolEVaR-PDPS} $(b)$ and Figure \ref{fig:n15m15cvoptsolEVaR-PDPS}$(b)$ that the variable represented in pink is not feasible from the beginning. However, this flaw is corrected as the algorithm advances and the (approximate) optimal values of the problems obtained via the two competing methods are (approximatively) the same.
\end{remark}

\section{Conclusions}

We propose a new approach for solving convex portfolio optimization problems formulated by means of the entropic value-at-risk that makes use of Lagrange duality and is based on splitting proximal point methods. More precisely, we formulated a convex optimization problem that can be solved via a parallel splitting proximal point algorithm and whose Lagrange dual problem is our target, the portfolio optimization problem. In order to be able to employ the parallel splitting proximal point algorithm, we provided a new epigraphical projection formula, for the projection on the epigraph of the Shannon entropy function. After solving the considered problem with a parallel splitting proximal point algorithm, optimal solutions to the convex portfolio optimization problems formulated by means of the entropic value-at-risk are recovered by solving a system of equations that corresponds to the necessary and sufficient optimality conditions for the mentioned primal-dual pair of optimization problems. Comparisons of our computational results with the primal-dual interior point method proposed in \cite{AJF} to the same end reveal that our approach is competitive against state of the art methods.\\

{\bf Conflicts of interests.} The authors have no conflicts of interests to report.\\

{\bf Acknowledgements.} The work of Nail Baloul and Sorin-Mihai Grad was partially supported by the FMJH Program Gaspard Monge for optimization and operations research and their interactions with data science. The author Oleg Wilfer conducted this research while affiliated with Faculty of Mathematics, Chemnitz University of Technology.

\end{document}